\newtheorem{thm}{Theorem}[section]
\newtheorem{lem}[thm]{Lemma}
\newtheorem{prop}[thm]{Proposition}
\newtheorem{cor}[thm]{Corollary}
\newtheorem{dfn}[thm]{Definition}
\newtheorem{rem}[thm]{Remark}
\newtheorem{ex}[thm]{Example}
\titleformat*{\section}{\normalsize\bfseries}
\titleformat*{\subsection}{\normalsize\bfseries}
\def\N{{\mathbb N}}
\def\R{{\mathbb R}}
\def\Rp{{\mathbb R}_+}
\def\Rb{\overline{{\mathbb R}}_+}
\def\DS{\displaystyle}
\def\supp{\mbox{\rm supp}}
\def\vep{\varepsilon}
\begin{document}

\begin{flushright}
\footnotesize{\today}
\end{flushright}
\begin{center}
{\large\bf
Mass conservation and gelation\\ for the Smoluchowski coagulation equation:\\ a generalized moment approach
}
\end{center}
\begin{center}
  Masato Kimura$^{\mbox{\scriptsize a}}$,
  Hisanori Miyata$^{\mbox{\scriptsize b}}$,
\begin{tabular}{l}
  {\scriptsize a)} Faculty of Mathematics and Physics, Kanazawa University\\
{\scriptsize b)} Graduate School of Natural Science and Technology, Kanazawa University
\end{tabular}
\end{center}

\renewcommand{\thefootnote}{\fnsymbol{footnote}}
\footnote[0]{E-mail: mkimura@se.kanazawa-u.ac.jp (MK)}
\renewcommand{\thefootnote}{\arabic{footnote}}

\vspace{0.5cm}
\begin{abstract}
The Smoluchowski coagulation equation (SCE) is a population balance model that describes the time evolution of cluster size distributions resulting from particle aggregation. Although it is formally a mass-conserving system, solutions may exhibit a gelation phenomenon—a sudden loss of mass—when the coagulation kernel grows superlinearly. In this paper, we rigorously analyze mass conservation and gelation for weak solutions to the SCE with inhomogeneous coagulation kernels. By introducing a generalized moment framework, we derive sharp sufficient conditions for both mass conservation and gelation, expressed in terms of the initial data and the properties of the coagulation kernel.
\end{abstract}

\tableofcontents

\section{Introduction}\label{sec:1}
\setcounter{equation}{0}

The phenomenon of coagulation has been observed in various contexts worldwide and has been studied across multiple disciplines, including biology, chemical engineering, and physics. Among the various models describing such phenomena, the Smoluchowski coagulation equation (SCE) plays a fundamental role in modeling the coagulation of particles.

Originally proposed by Marian Smoluchowski in 1916 as a discrete model~\cite{Smoluchowski1916}, and later extended by M\"{u}ller to a continuous setting~\cite{Muller1928}, the following continuous form of the SCE is now widely studied:
\begin{equation}\label{SCE}
  \partial_t u(x,t)=\frac{1}{2} \int_{0}^{x}K(x-y,y)u(y,t)u(x-y,t)\,dy
  - \int_{0}^{\infty}K(x,y)u(x,t)u(y,t)\,dy,
\end{equation}
where $x \in \Rb \coloneqq [0, \infty)$ and $t \in I \coloneqq [0, T]$ for some $T > 0$.

The first integral term on the right-hand side of \eqref{SCE} represents the rate at which clusters of size $x$ are formed due to coagulation, where two clusters of size $y$ and $x - y$ combine at a rate given by $K(x - y, y)$ for $y \in [0, x]$. The factor $1/2$ is included to avoid double-counting. In contrast, the second integral term describes the rate at which clusters of size $x$ are lost due to coagulation with other clusters, resulting in larger clusters of size $x + y$.

The coagulation model \eqref{SCE} finds applications in diverse areas, including wastewater treatment~\cite{Z-O-R-V2015}, astrophysics~\cite{Estrada-Cuzzi2008,Kolesnichenko2020}, aerosol science~\cite{Ferreira2021}, protein aggregation~\cite{Z-K-R2018}, and nanoparticle clustering~\cite{Alexandrov2021}. In this paper, we refer to \eqref{SCE} simply as the Smoluchowski coagulation equation (SCE).

We consider the initial value problem for \eqref{SCE} with an initial condition $u_0 \in C^0(\Rb; \Rb)$ that satisfies the finite mass condition: $\int_{0}^{\infty} x u_0(x)\,dx < \infty$. We also assume that the coagulation kernel $K(x, y)$ satisfies the following properties:
\begin{align}\label{Kcond}
K \in L^1_{\rm loc} (\Rb \times \Rb), \quad K(x, y) = K(y, x) \ge 0,
\end{align}
where $x \in \Rb$ denotes the cluster size (volume or mass). The function $u(x,t)$ describes the size distribution of clusters at time $t \in I$, and $K(x, y)$ governs the rate at which clusters of sizes $x$ and $y$ coagulate. The functional form of $K$ varies depending on the physical context~\cite{Aldous1999, S-H-P1994}.

It is known that the SCE \eqref{SCE} can be formally written as the following conservation law~\cite{Filbet-Laurencot2004, M-F-F-K1998, T-I-N1996}:
\begin{align}\label{conservation law}
  x \partial_t u(x,t) = -\partial_x J[u](x,t), \quad (x,t) \in \Rb \times I,
\end{align}
where the mass flux $J[u](x,t)$ denotes the rate of mass transfer across the point $x$ in the size domain $\Rb$, from the region $[0, x)$ to $(x, \infty)$. A detailed derivation is provided in the appendix of~\cite{I-K-M2023}.

Letting $M_1(t) \coloneqq \int_0^\infty x u(x,t)\,dx$ denote the total mass, we formally obtain:
\begin{align*}
M_1(t) - M_1(0)
&= \int_0^\infty \int_0^t \partial_s(xu(x,s))\,ds\,dx\\
&= -\int_0^t \int_0^\infty \partial_x J[u](x,s)\,dx\,ds\\
&= -\int_0^t (J[u](\infty,s) - J[u](0,s))\,ds\\
&= -\int_0^t J[u](\infty,s)\,ds,
\end{align*}
since $J[u](0,s) = 0$ by definition. We define $I_\infty(t) \coloneqq \int_0^t J[u](\infty,s)\,ds \ge 0$. If $I_\infty(t) = 0$, mass is conserved, i.e., $M_1(t) = M_1(0)$. If $I_\infty(t) > 0$, then $M_1(t) < M_1(0)$, indicating the occurrence of gelation. Although the above is only a formal argument, it is rigorously justified in Section~\ref{sec:5} (see Lemma~\ref{Sahidul-Kimura-Lem2} and Theorem~\ref{mainth1}).

For instance, consider the homogeneous coagulation kernel $K(x, y) = x^\alpha y^\beta + x^\beta y^\alpha$ with $\alpha, \beta \in [0, 1]$. It is known that weak solutions fail to conserve mass if $\alpha + \beta > 1$, whereas mass is conserved when $\alpha + \beta \le 1$ (see~\cite{E-L-M-P2003}).  
As examples of mass conservation and gelation, Figures~\ref{fig:1} and \ref{fig:2} show numerical results for $K(x, y) = (xy)^{1/2}$ and $K(x, y) = xy + x + y$, respectively, using the finite volume method from~\cite{Filbet-Laurencot2004}. The initial condition in both cases is $u_0(x) = e^{-x}$. Figures~\ref{fig:1a} and \ref{fig:2a} show the time evolution of $u(x,t)$ for $t \in [0,3]$, with color corresponding to time. Figures~\ref{fig:1b} and \ref{fig:2b} display the evolution of total mass $M_1(t)$. For $K(x, y) = (xy)^{1/2}$, where the kernel has at most linear growth, mass is conserved. However, for $K(x, y) = xy + x + y$, which includes quadratic growth, a decrease in total mass over time is observed, indicating gelation.

\begin{figure}[h]
  \centering
  \begin{minipage}{0.43\columnwidth}
    \centering
    \includegraphics[width=\columnwidth]{./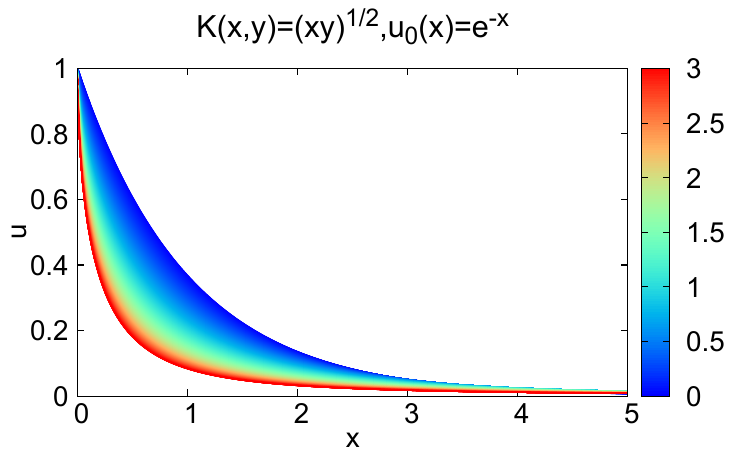}
    \subcaption{$u(x,t)$ $(x\in [0,5],~t\in [0,3])$}
    \label{fig:1a}
  \end{minipage}
  \hspace{5mm}
  \begin{minipage}{0.43\columnwidth}
    \centering
    \includegraphics[width=\columnwidth]{./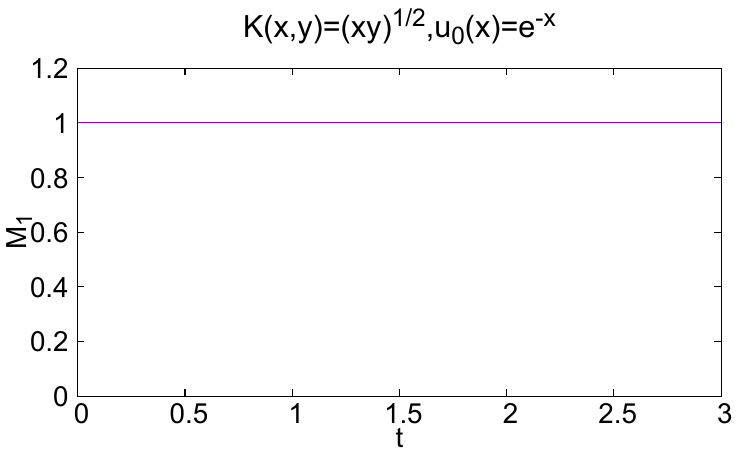}
    \subcaption{$M_1(t)$ $(t\in [0,3])$}
    \label{fig:1b}
  \end{minipage}
  \caption{Numerical result for $K(x,y)=(xy)^{1/2}$ and $u_0(x)=e^{-x}$}
  \label{fig:1}
\end{figure}
\begin{figure}[h]
  \centering
  \begin{minipage}{0.43\columnwidth}
    \centering
    \includegraphics[width=\columnwidth]{./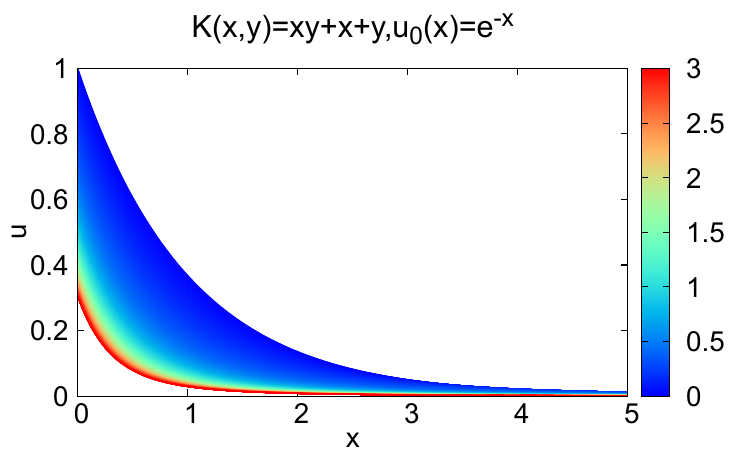}
    \subcaption{$u(x,t)$ $(x\in [0,5],~t\in [0,3])$}
    \label{fig:2a}
  \end{minipage}
  \hspace{5mm}
  \begin{minipage}{0.43\columnwidth}
    \centering
    \includegraphics[width=\columnwidth]{./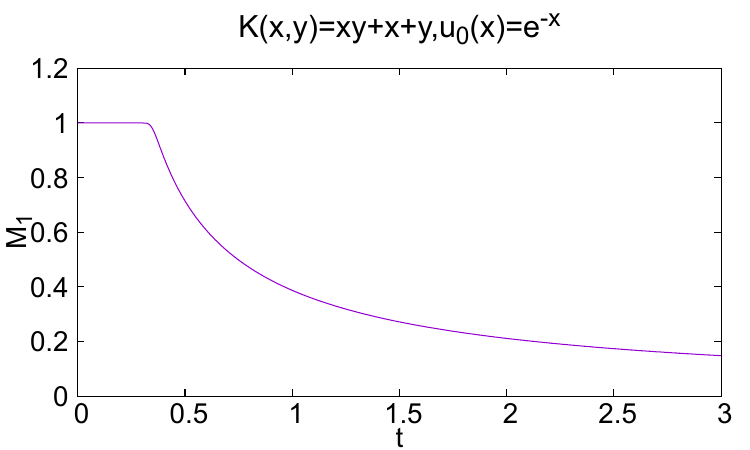}
    \subcaption{$M_1(t)$ $(t\in [0,3])$}
    \label{fig:2b}
  \end{minipage}
  \caption{Numerical result for $K(x,y)=xy+x+y$ and $u_0(x)=e^{-x}$}
  \label{fig:2}
\end{figure}

The existence and uniqueness of solutions to the SCE, as well as qualitative properties such as gelation, have been the subject of extensive mathematical research. A comprehensive review is beyond the scope of this paper and the expertise of the authors. For foundational results on existence and other mathematical properties, we refer the reader to~\cite{E-L-M-P2003,E-M-P2002,Friedman1998,McLeod1962,Melzak1957}; for recent generalizations and developments, see~\cite{A-L-S2021,G-L-S2025,Xie2023} and the references therein.

The objective of this paper is to refine the generalized moment method introduced in~\cite{I-K-M2023} for analyzing mass conservation and gelation in the SCE. We rigorously develop the notion of weak solutions using generalized moments and investigate the behavior of generalized moments associated with these solutions. We also derive sufficient conditions under which weak solutions exhibit mass conservation or gelation.

The structure of this paper is as follows. Section~\ref{sec:2} introduces generalized and truncated generalized moments, along with a proposition concerning sub/super-additivity and convexity/concavity of the weight function. In Section~\ref{sec:3}, we define weak solutions via generalized moments and compare them with the definition in~\cite{E-L-M-P2003}. The mass flux $J[u]$ is introduced, and it is shown that the conservation law \eqref{conservation law} holds in a weak sense (see Proposition~\ref{weaksolprop}). Section~\ref{sec:4} explores the precise properties of generalized and truncated moments for weak solutions. In Section~\ref{sec:5}, we present sufficient conditions on $K(x,y)$ and $u_0(x)$ to ensure mass conservation, extending the results of~\cite{I-K-M2023} from classical to weak solutions. Section~\ref{sec:6} provides sufficient conditions for the occurrence of gelation, again using the generalized moment approach. Finally, Section~\ref{sec:7} summarizes the main results and outlines possible future directions.

This paper is based in part on the master's thesis of one of the authors~\cite{Miyata2025}. Throughout, we use the notation $\Rp := (0, \infty)$ and $\Rb := [0, \infty)$.

\section{Generalized moment framework}\label{sec:2}
\setcounter{equation}{0}

We begin by introducing the $k$-th order moment and truncated moment of a solution $u(x,t)$.

\begin{dfn}[Moment and truncated moment of $k$-th order]\label{moment}
  For $T>0$, we set $I\coloneqq[0,T]$.
  Let $u:\Rb\times I\to \Rb$ be a nonnegative measurable function such that
  $u(\cdot,t)\in L^1_{\rm loc}(\Rb)$ for $t\in I$.
  For $k\in \N\cup \{0\}$, we define the $k$-th order truncated moment $m_k(r,t)$ and moment
  $M_k(t)$ as follows:
  \begin{align*}
    m_k(r,t)&\coloneqq \int^r_0 x^k u(x,t)\,dx \in [0,\infty) \quad(r\geq 0,\ t\in I),\\
    M_k(t)&\coloneqq \lim_{r\to \infty} m_k(r,t) = \int^\infty_0 x^k u(x,t)\,dx \in [0,\infty] \quad(t\in I).
  \end{align*}
\end{dfn}

\begin{rem}
{\rm
  The zeroth-order moment $M_0(t)$ and the first-order moment $M_1(t)$ represent the ``total number of clusters'' and the ``total mass,'' respectively.
}
\end{rem}

The primary analytical tool of this paper is the generalized moment with an arbitrary weight function, as developed in \cite{I-K-M2023}.

\begin{dfn}[Generalized moment and truncated generalized moment]\label{gmoment}
Under the same assumptions as in Definition~\ref{moment},
for a weight function $b\in C^0(\Rb)$, we define the truncated generalized moment $m^b(r,t)$ and generalized moment $M^b(t)$ as follows:
\begin{align}
m^b(r,t)&\coloneqq \int^r_0 b(x) u(x,t)\,dx \quad(r\geq 0,\ t\in I),\label{mbr}\\
M^b(t)&\coloneqq  \lim_{r\to \infty} m^b(r,t)=\int^\infty_0 b(x) u(x,t)\,dx
\quad(t\in I)\quad\text{(if it exists)}.\notag
\end{align}
In particular, if $b\in C^0(\Rb;\R)$, then $M^b(t)$ exists and $M^b(t)\in [0,\infty]$ for all $t\in I$.
\end{dfn}

In many instances, the weight function $b(x)$ is assumed to possess subadditivity or superadditivity. Their definitions and useful properties are given below.

\begin{dfn}[Subadditivity and superadditivity]
A real-valued function $b$ defined on $\Rb$ is called \emph{subadditive} if
$b(x+y)\leq b(x)+b(y)$ holds for $x,y\in\Rb$.
Similarly, $b$ is called \emph{superadditive} if $b(x+y)\geq b(x)+b(y)$ holds for $x,y\in\Rb$.
\end{dfn}

\begin{prop}\label{additive}
Let $b$ be a function defined on $\Rb$.
\begin{enumerate}[(1)]
\item
If $b$ is concave on $\Rb$, then
$b(0)+b(x+y)\le b(x)+b(y)$ holds for $x,y\in\Rb$.
Moreover, if $b(0)\ge 0$, then $b$ is subadditive on $\Rb$.
\item
If $b$ is strictly concave on $\Rb$, then
$b(0)+b(x+y)< b(x)+b(y)$ holds for $x,y\in\Rp$.
Moreover, if $b(0)\ge 0$, then $b$ is strictly subadditive on $\Rp$.
\item
If $b$ is convex on $\Rb$, then
$b(0)+b(x+y)\ge b(x)+b(y)$ for $x,y\in\Rb$. Moreover, if $b(0)\le 0$, then $b$ is superadditive on $\Rb$.
\item
If $b$ is strictly convex on $\Rb$, then
$b(0)+b(x+y)> b(x)+b(y)$ for $x,y\in\Rp$.
Moreover, if $b(0)\le 0$, then $b$ is strictly superadditive on $\Rp$.
\end{enumerate}
\end{prop}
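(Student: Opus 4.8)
The plan is to derive (1) directly from the definition of concavity via a convex-combination identity, then read off (2) by noting where that argument produces a strict inequality, and finally obtain (3) and (4) by applying (1) and (2) to $-b$.

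First I would prove (1). Fix $x,y\in\Rb$; if $x+y=0$ then $x=y=0$ and $b(0)+b(x+y)=b(x)+b(y)$ trivially, so assume $s:=x+y>0$. The points $s$ and $0$ both lie in the convex set $\Rb$, and $x=\tfrac{x}{s}\,s+\tfrac{y}{s}\,0$, $y=\tfrac{y}{s}\,s+\tfrac{x}{s}\,0$ are convex combinations of them. Concavity of $b$ then gives $b(x)\ge\tfrac{x}{s}b(s)+\tfrac{y}{s}b(0)$ and $b(y)\ge\tfrac{y}{s}b(s)+\tfrac{x}{s}b(0)$; adding these and using $\tfrac{x}{s}+\tfrac{y}{s}=1$ yields $b(x)+b(y)\ge b(x+y)+b(0)$. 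If in addition $b(0)\ge 0$, then $b(x)+b(y)\ge b(x+y)+b(0)\ge b(x+y)$, i.e.\ $b$ is subadditive on $\Rb$.

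For (2) I would run the same computation but with $x,y\in\Rp$, so that $s=x+y>0$, the coefficients $\tfrac{x}{s},\tfrac{y}{s}$ lie in the open interval $(0,1)$, and the base points $s$ and $0$ are distinct. Strict concavity then makes both displayed inequalities strict, hence $b(x)+b(y)>b(x+y)+b(0)$, and combined with $b(0)\ge 0$ this gives strict subadditivity on $\Rp$. Parts (3) and (4) follow by symmetry: $b$ is convex (resp.\ strictly convex) on $\Rb$ if and only if $-b$ is concave (resp.\ strictly concave) on $\Rb$, and $b(0)\le 0$ if and only if $(-b)(0)\ge 0$; applying (1) (resp.\ (2)) to $-b$ and multiplying through by $-1$ reverses all the inequalities and turns the subadditivity (resp.\ strict subadditivity) of $-b$ into the superadditivity (resp.\ strict superadditivity) of $b$.

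The argument is entirely elementary. The only places that require a moment's care are the degenerate case $x+y=0$ in (1), and, in (2) and (4), checking that the convex combinations used are genuinely nontrivial (weights strictly between $0$ and $1$, and the two base points distinct) so that the strict form of concavity/convexity actually applies; beyond this I do not expect any real obstacle.
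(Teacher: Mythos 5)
Your proposal is correct and follows essentially the same route as the paper: writing $x$ and $y$ as convex combinations of $x+y$ and $0$, adding the two concavity inequalities, and reducing the convex cases to the concave ones via $-b$. The only difference is cosmetic (you isolate the case $x+y=0$ while the paper isolates $x=0$ or $y=0$), and both handle the strict case by the same observation about nontrivial weights.
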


\begin{proof}
Assume $b$ is concave on $\Rb$. Then for any $z\in\Rp$ and $\theta\in (0,1)$, it holds that
\begin{align}\label{thetaz}
b(\theta z)=b(\theta z+(1-\theta) 0)\ge \theta b(z)+(1-\theta) b(0).
\end{align}
Since $b(0)+b(x+y)\le b(x)+b(y)$ holds if $x=0$ or $y=0$, we suppose
$x>0$ and $y>0$.
For $x,y>0$, set $z = x+y$ and $\theta = x/(x+y)$, $y/(x+y)$ to obtain
\begin{align*}
  b(x)+b(y)&=b\left(\frac{x}{x+y}(x+y)\right)+b\left(\frac{y}{x+y}(x+y)\right)\\
  &\geq \left\{\frac{x}{x+y}b(x+y)+\frac{y}{x+y}b(0)\right\}
  +\left\{\frac{y}{x+y}b(x+y)+\frac{x}{x+y}b(0)\right\}\\
    &=b(x+y)+b(0).
  \end{align*}
Thus, $b(x)+b(y)\ge b(x+y)+b(0)$ holds, and if $b(0)\ge 0$, then $b$ is subadditive.
For the case that $b$ is strictly concave, the inequality sign in \eqref{thetaz} is replaced by $>$
and the strict subadditivity similarly follows.

When $b$ is a convex function on $\Rb$, since $-b$ is concave on $\Rb$,
$-b(0)-b(x+y)\le -b(x)-b(y)$ holds for $x,y\in\Rb$. This implies the third and fourth statements.
\end{proof}

\section{Classical and weak solutions}\label{sec:3}
\setcounter{equation}{0}

Hereafter, we assume that the coagulation kernel $K(x,y)$ satisfies the conditions given in \eqref{Kcond}. For brevity, these assumptions will not be repeated, but are understood throughout the paper.

For convenience, we rewrite the SCE \eqref{SCE} as follows:
\[
\partial_t u(x,t) = Q[u](x,t),
\]
where we define:
\begin{align*}
  Q_1[u](x,t) &\coloneqq \frac{1}{2}\int_{0}^{x}K(x-y,y)u(y,t)u(x-y,t)\,dy,\\
  Q_2[u](x,t) &\coloneqq \int_{0}^{\infty}K(x,y)u(x,t)u(y,t)\,dy,\\
  Q[u](x,t) &\coloneqq Q_1[u](x,t)-Q_2[u](x,t).
\end{align*}
Here, $Q_1[u]$ and $Q_2[u]$ respectively represent the rates of increase and decrease of the density of clusters of size $x$.

We now introduce the notions of classical and weak solutions to the initial value problem for the SCE and examine their properties.

\begin{dfn}[Classical solution] \label{classicalsolution}
Let $T > 0$ and define $I \coloneqq [0,T]$. Suppose $u_0 \in C^0(\Rb; \Rb)$ and the coagulation kernel $K$ satisfies \eqref{Kcond} and $K \in C^0(\Rb \times \Rb)$. A function $u: \Rb \times I \to \Rb$ is called a classical solution to the SCE \eqref{SCE} on $I$ with initial condition $u(x,0) = u_0(x)$ if:
  \begin{enumerate}[{\rm (C1)}]
    \item $u\in C^0(\Rb \times I;\Rb)$ and $\partial_t u\in C^0(\Rb \times I)$;
    \item $u(x, 0) = u_0(x)\;(x\in \Rb )$;
    \item
      $\DS \left[(x,t)\mapsto \int^\infty_0 K(x,y)u(y,t)\,dy\right]\in C^0(\Rb \times I)$;
    \item $\displaystyle\partial_t u(x,t)=Q[u](x,t)\;((x,t)\in\Rb \times I)$.
  \end{enumerate}
If $u$ satisfies these conditions on $[0,T]$ for every $T > 0$, then we say that $u$ is a classical solution on $\Rb$.
\end{dfn}

For classical solutions $u$, the condition $M_1(0) < \infty$ implies $M_0(0) < \infty$, and both $M_0(t)$ and $M_1(t)$ are known to be non-increasing in time (see Proposition~2 in \cite{I-K-M2023} or Proposition~\ref{Monotonically decreasing property of the moment}).

We define the weighted Lebesgue space:
\[
L^1_1(\Rp) \coloneqq \left\{ f \in L^1(\Rp) \,\middle|\, \int_0^\infty (1+x)|f(x)|\,dx < \infty \right\},
\]
which is a Banach space under the norm
\[
\|f\|_{L^1_1(\Rp)} \coloneqq \int_0^\infty (1+x)|f(x)|\,dx.
\]
A nonnegative function $u(\cdot,t)$ belongs to $L^1_1(\Rp)$ if and only if both $M_0(t)$ and $M_1(t)$ are finite. Thus, for classical solutions, if $u(\cdot,0) \in L^1_1(\Rp)$, then $u(\cdot,t) \in L^1_1(\Rp)$ for all $t \in I$.

Next, we introduce the notion of weak solutions.

\begin{dfn}[Weak solution] \label{weaksol}
Let $T > 0$ and $I \coloneqq [0,T]$. Suppose $u_0 \in L^1_1(\Rp)$ and $u_0(x) \ge 0$. A function $u$ is called a weak solution to the initial value problem for the SCE \eqref{SCE} on $I$ with initial data $u(x,0) = u_0(x)$ if:
\begin{enumerate}[{\rm (W1)}]
  \item $u \in L^\infty(0,T; L^1_1(\Rp))$;
  \item $u(x,t) \ge 0$ for almost every $(x,t) \in \Rp \times I$;
  \item $Q_1[u], Q_2[u] \in L^1_{\rm loc}(\Rb \times I)$;
  \item For all $b \in C^0(\Rb)$, $r > 0$, and $t \in I$,
  \[
  m^b(r,t) = \int_0^r b(x)u_0(x)\,dx + \int_0^t \int_0^r b(x)Q[u](x,s)\,dxds,
  \]
  where $m^b(r,t)$ is the truncated generalized moment defined in \eqref{mbr}.
\end{enumerate}
If $u$ satisfies these conditions for all $T > 0$, we say that $u$ is a weak solution on $\Rb$.
\end{dfn}

Hereafter, we assume that the initial data $u_0$ belongs to $L^1_1(\Rp)$; this assumption will not be repeated explicitly.

We state a property of the generalized moment for weak solutions. It follows directly from (W3) and (W4), so we omit the proof.

\begin{prop}\label{weaksolpr}
Let $u$ be a weak solution to the SCE on $I = [0,T]$, and let $m^b$ be the truncated generalized moment with weight $b \in C^0(\Rb)$. Then, for each $r \in \Rb$, $m^b(r,\cdot) \in W^{1,1}(0,T) \subset C^0(I)$, and 
\[
\partial_t m^b(r,t) = \int_0^r b(x)Q[u](x,t)\,dx \quad (\text{a.e. } t \in I), \quad
m^b(r,0) = \int_0^r b(x)u_0(x)\,dx.
\]
\end{prop}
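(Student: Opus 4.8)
The plan is to unpack the definition of weak solution and recognize that the integrability hypothesis (W3) is exactly what is needed to legitimize the claimed regularity in $t$. Fix $r \in \Rb$ and $b \in C^0(\Rb)$. Since $b$ is continuous it is bounded on the compact interval $[0,r]$, say $|b| \le C_r$ there. By (W3), $Q[u] = Q_1[u] - Q_2[u] \in L^1_{\rm loc}(\Rb \times I)$, so $\int_0^t \int_0^r |b(x)|\,|Q[u](x,s)|\,dx\,ds \le C_r \int_0^t\int_0^r |Q[u](x,s)|\,dx\,ds < \infty$. In particular, by Fubini–Tonelli, the function
\[
g_r(s) \coloneqq \int_0^r b(x) Q[u](x,s)\,dx
\]
is defined for a.e.\ $s \in I$ and belongs to $L^1(0,T)$, with $\|g_r\|_{L^1(0,T)} \le C_r \|Q[u]\|_{L^1((0,r)\times I)}$.

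Next I would invoke the identity (W4): for every $t \in I$,
\[
m^b(r,t) = \int_0^r b(x) u_0(x)\,dx + \int_0^t g_r(s)\,ds.
\]
The right-hand side is the sum of a constant and the indefinite integral of an $L^1(0,T)$ function, hence it is an absolutely continuous function of $t$ on $[0,T]$; equivalently it lies in $W^{1,1}(0,T)$, and the Sobolev embedding $W^{1,1}(0,T) \hookrightarrow C^0(I)$ (indeed into the absolutely continuous functions) gives $m^b(r,\cdot) \in C^0(I)$. By the Lebesgue differentiation theorem applied to the indefinite integral, $\partial_t m^b(r,t) = g_r(t)$ for a.e.\ $t \in I$, which is precisely the asserted formula $\partial_t m^b(r,t) = \int_0^r b(x) Q[u](x,t)\,dx$. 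Setting $t = 0$ in (W4) yields $m^b(r,0) = \int_0^r b(x) u_0(x)\,dx$, completing all three claims.

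There is really no serious obstacle here — this is the reason the authors flag the proof as omittable — but if I had to name the one point requiring a moment's care, it is the passage from (W3) to the a.e.-finiteness and $L^1$-membership of $g_r$: one must observe that $L^1_{\rm loc}(\Rb \times I)$ integrability of $Q_1[u]$ and $Q_2[u]$ over the fixed box $[0,r]\times[0,T]$ (together with the boundedness of $b$ there) is exactly enough to split off the $x$-integral via Fubini and control it uniformly in $s$. Everything downstream — absolute continuity, the Sobolev embedding, and differentiation of the integral — is standard real analysis. A minor notational remark: strictly speaking (W4) gives $m^b(r,\cdot)$ as a specific continuous representative of its $W^{1,1}$ class, so the statement $m^b(r,\cdot) \in W^{1,1}(0,T) \subset C^0(I)$ should be read with that identification in mind.
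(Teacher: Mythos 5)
Your proof is correct and follows exactly the route the paper indicates: the paper omits the proof, remarking only that the statement ``follows directly from (W3) and (W4),'' and your argument is precisely the unpacking of that remark (integrability of $g_r$ from (W3), absolute continuity and a.e.\ differentiation of the indefinite integral from (W4), and $t=0$ for the initial value). Nothing further is needed.
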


\begin{prop}
Suppose $u_0 \in C^0(\Rb) \cap L^1_1(\Rp)$ and $K$ satisfies \eqref{Kcond} with $K \in C^0(\Rb \times \Rb)$. Let $I = [0,T]$. Then a function $u$ is a classical solution to the SCE on $I$ if and only if it is also a weak solution on $I$.
\end{prop}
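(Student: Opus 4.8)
The plan is to prove the two implications separately, with the "classical $\Rightarrow$ weak" direction being essentially a matter of checking the axioms (W1)--(W4) against (C1)--(C4), and the "weak $\Rightarrow$ classical" direction requiring a bootstrap from the integral identity in (W4) back to the pointwise PDE in (C4).

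\medskip

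\noindent\textbf{Classical $\Rightarrow$ weak.} Suppose $u$ is a classical solution. For (W1): by (C1) $u$ is continuous on $\Rb\times I$, hence $u(\cdot,t)\in L^1_{\rm loc}(\Rb)$ for each $t$; the membership $u(\cdot,0)=u_0\in L^1_1(\Rp)$ together with the cited monotonicity of $M_0(t)$ and $M_1(t)$ (Proposition~2 of~\cite{I-K-M2023}, restated as Proposition~\ref{Monotonically decreasing property of the moment}) gives $\sup_{t\in I}\|u(\cdot,t)\|_{L^1_1(\Rp)}\le \|u_0\|_{L^1_1(\Rp)}<\infty$, which is (W1). Condition (W2) is immediate from $u:\Rb\times I\to\Rb$. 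For (W3), one estimates $Q_1[u]$ and $Q_2[u]$: using (C3), the local integrability of $K$, and the local boundedness of the continuous function $u$, both $Q_1[u]$ and $Q_2[u]$ lie in $L^1_{\rm loc}(\Rb\times I)$; in fact $Q_2[u](x,t)=u(x,t)\int_0^\infty K(x,y)u(y,t)\,dy$ is continuous by (C3), and $Q_1[u]$ is handled by a Tonelli argument on the simplex $\{0\le y\le x\le R\}$. Finally, (W4) is obtained by taking (C4), $\partial_t u(x,t)=Q[u](x,t)$, multiplying by $b(x)$ for $b\in C^0(\Rb)$, and integrating over $x\in[0,r]$ and $s\in[0,t]$; Fubini applies because $b$ is bounded on $[0,r]$ and $Q[u]\in L^1_{\rm loc}$, and the $s$-integral of $\partial_s u$ collapses by the fundamental theorem of calculus (valid since $\partial_t u$ is continuous by (C1)), yielding exactly the identity in (W4).

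\medskip

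\noindent\textbf{Weak $\Rightarrow$ classical.} Now suppose $u$ is a weak solution and, by hypothesis, $u_0\in C^0(\Rb)$ and $K\in C^0(\Rb\times\Rb)$. The goal is to recover (C1)--(C4). The key step is Proposition~\ref{weaksolpr}: for each fixed $r$, $m^b(r,\cdot)\in W^{1,1}(0,T)$ with $\partial_t m^b(r,t)=\int_0^r b(x)Q[u](x,t)\,dx$. Taking $b$ to be a smooth approximation of the indicator of a small interval $[x_0-\delta,x_0+\delta]$ (or, more cleanly, differencing $m^b(r_2,\cdot)-m^b(r_1,\cdot)$ and using the arbitrariness of $b$), one extracts that $u(x,\cdot)$ satisfies, for a.e.\ $t$, the integrated-in-space version of the SCE; then a Lebesgue differentiation argument in $x$ upgrades this to $u(x,t)=u_0(x)+\int_0^t Q[u](x,s)\,ds$ for a.e.\ $x$ and every $t$. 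Once this mild formulation is in hand, the continuity of $K$ and the $L^1_1$ bound (W1) make the map $(x,t)\mapsto Q[u](x,t)$ continuous in a suitable sense — this is where (C3) is verified, since $\int_0^\infty K(x,y)u(y,t)\,dy$ is continuous by dominated convergence using $K\in C^0$ and the uniform mass bound — and then $t\mapsto \int_0^t Q[u](x,s)\,ds$ is $C^1$ in $t$ with continuous $x$-dependence, forcing $u$ itself to satisfy (C1), and differentiating the mild formula in $t$ recovers (C4); (C2) is the initial condition, already built into (W4) at $t=0$.

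\medskip

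\noindent\textbf{Main obstacle.} The delicate point is the weak-to-classical direction: passing from the truncated moment identity (W4), which is a statement about $\int_0^r b(x)u(x,t)\,dx$ for all continuous $b$ and all $r$, to a genuinely pointwise identity $u(x,t)=u_0(x)+\int_0^t Q[u](x,s)\,ds$. The arbitrariness of $b\in C^0(\Rb)$ is exactly what makes this work — it lets one localize in $x$ — but one must be careful that the resulting pointwise identity holds for \emph{every} $t\in I$ (not merely a.e.\ $t$), which uses the $W^{1,1}$-in-$t$ regularity from Proposition~\ref{weaksolpr} together with continuity of the right-hand side in $t$; and one must verify that $Q_1[u]$ and $Q_2[u]$ are not just $L^1_{\rm loc}$ but continuous, which requires combining $K\in C^0$, (W1), and (W3) via a dominated-convergence argument on the convolution-type integrals. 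The rest is routine bookkeeping.
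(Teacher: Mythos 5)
Your ``classical $\Rightarrow$ weak'' direction is correct and essentially identical to the paper's: (W1) via the monotonicity of $M_0$ and $M_1$, (W2) from nonnegativity, (W3) from the regularity of $Q_1[u]$ and $Q_2[u]$, and (W4) by multiplying (C4) by $b$ and integrating in $x$ and $s$. No issues there.

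The converse is where there is a genuine gap. You try to prove that \emph{every} weak solution is classical by bootstrapping regularity out of (W4), but this cannot work: a weak solution is only an a.e.-defined element of $L^\infty(0,T;L^1_1(\Rp))$, so it can be modified on a null set and remain a weak solution, and hence cannot be proved continuous, let alone to have continuous $\partial_t u$. Concretely, two of your key claims fail. First, localizing in $x$ only yields $u(x,t)=u_0(x)+\int_0^t Q[u](x,s)\,ds$ for a.e.\ $x$, which does not give (C1) or (C2) pointwise. Second, your assertion that $K\in C^0$ together with (W1) makes $(x,t)\mapsto Q[u](x,t)$ continuous is false for $Q_1[u]$: the map $x\mapsto\int_0^x K(x-y,y)u(y,t)u(x-y,t)\,dy$ is a convolution-type integral of two functions that are merely $L^1$, and such integrals need not be continuous (already with $K\equiv 1$ and $u(y)=y^{-1/2}\mathbf{1}_{(0,1]}(y)\in L^1_1(\Rp)$ one gets the constant value $\pi$ for $x\in(0,1]$ but $0$ at $x=0$); likewise $Q_2[u](x,t)=u(x,t)\int_0^\infty K(x,y)u(y,t)\,dy$ inherits whatever discontinuities $u$ has. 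The paper does not attempt what you attempt: its converse is the much weaker statement that a weak solution \emph{which in addition satisfies the regularity hypotheses (C1) and (C3)} is a classical solution, which follows immediately from Proposition~\ref{weaksolpr}. To repair your argument you must either add (C1) and (C3) as hypotheses in the converse direction, as the paper implicitly does, or prove a genuine regularity theorem for weak solutions, which is a substantially harder task than this proposition intends.
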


\begin{proof}
Let $u$ be a classical solution on $I$.
Since $M_0(t)\le M_0(0)$ and $M_1(t)\le M_1(0)$ hold for $t\in I$ from Proposition~2 of \cite{I-K-M2023} (or Proposition~\ref{Monotonically decreasing property of the moment} in this paper),
we have
\begin{align*}
  \|u(t)\|_{L^1_1(\R_+)}=M_0(t)+M_1(t)\le M_0(0)+M_1(0)= \|u_0\|_{L^1_1(\R_+)}\quad (t\in I),
\end{align*}
and (W1) holds with $\|u\|_{L^\infty(0,T;L^1_1(\R_+))}\le \|u_0\|_{L^1_1(\R_+)}$.
The first condition of (C1) implies (W2). From \eqref{Kcond} and (C1), since $K$ and $u$ are continuous,
$Q_1[u]\in C^0(\R_+\times I)$ holds. From (C3), $Q_2[u]\in C^0(\R_+\times I)$ holds, too.
Therefore, $u$ satisfies (W3). The condition (W4) follows from (C2) and (C4) as
\begin{align*}
  m^b(r,t) &= m^b(r,0)+\int_{0}^{r}b(x)\big(u(x,t)-u(x,0)\big)\,dx\\
  &= m^b(r,0)+\int_{0}^{r}\int_{0}^{t}b(x)\partial_s u(x,s)\,dsdx\\
  &= \int_{0}^{r}b(x)u_0(x)\,dx+\int_{0}^{t}\int_{0}^{r}b(x)Q[u](x,s)\,dxds.
\end{align*}

Conversely, if $u$ is a weak solution satisfying (C1) and (C3), then by Proposition~\ref{weaksolpr}, the remaining classical conditions (C2) and (C4) are satisfied.
\end{proof}

Escobedo et al.~\cite{E-L-M-P2003} provide an alternative definition of weak solution. A function $u$ is a weak solution on $\Rb$ in their sense if it satisfies:
\begin{enumerate}[(E1)]
\item $u\in L^\infty(0,T;L^1_1(\Rp))$ $(T>0);$\label{E1}
\item $u(x,t)\geq 0\quad (\text{a.e.}~ (x,t)\in \Rb\times \Rb)$;\label{E2}
\item $Q_1[u],Q_2[u]\in L^1_{\rm loc}(\Rb\times \Rb)$;\label{E3}
\item \label{E4}
$\displaystyle\int_{0}^{\infty}\varphi(x)u(x,t)\,dx=\int_{0}^{\infty}\varphi(x)u_0(x)\,dx+\int_{0}^{t}\int_{0}^{\infty}\varphi(x)Q[u](x,s)\,dxds$\\
$(\varphi\in C_0^\infty(\Rb),~t\in\Rb)$;
\item $u\in C^0 (\Rb;\mbox{w-}L^1(\Rp))$ and $u(\cdot,0)=u_0$ in $L^1(\Rp)$.
\end{enumerate}
Here, $\mathrm{w\text{-}}L^1(\Rp)$ denotes the weak topology on $L^1(\Rp)$, meaning:
\[
u \in C^0(\Rb; \mathrm{w\text{-}}L^1(\Rp)) \iff \left[t \mapsto \int_0^\infty \varphi(x)u(x,t)\,dx\right] \in C^0(\Rb) \quad (\forall \varphi \in L^\infty(\Rp)).
\]
\begin{prop}\label{wsprop}
For a given $u_0\in L^1_1(\Rp)$,
$u$ is a weak solution to SCE on $\Rb$ if and only if $u$ satisfies {\rm (E1-4)}.
\end{prop}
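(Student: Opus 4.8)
The plan is to prove the equivalence in Proposition~\ref{wsprop} by showing that (W1)--(W4) and (E1)--(E5) describe the same class of functions, using the fact that (W1)=(E1), (W2)=(E2), and (W3)=(E3) are literally identical once we identify $I=[0,T]$ for all $T>0$. Hence the real work is to show that, given (W1)--(W3) (equivalently (E1)--(E3)), the single condition (W4) is equivalent to the pair (E4)+(E5). One direction — from (W4) to (E4) and (E5) — should be the more delicate one; the converse will follow by a density/truncation argument.

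First I would prove (W4) $\Rightarrow$ (E4). Given $\varphi\in C_0^\infty(\Rb)$, its support is contained in some $[0,r]$, so $\varphi\in C^0(\Rb)$ and $m^\varphi(r,t)=\int_0^\infty\varphi(x)u(x,t)\,dx$; moreover $\int_0^r\varphi(x)Q[u](x,s)\,dx=\int_0^\infty\varphi(x)Q[u](x,s)\,dx$ because $Q[u](\cdot,s)\in L^1_{\rm loc}$ and $\varphi$ is compactly supported. So (W4) with this $b=\varphi$ and this $r$ is exactly (E4). Next, (W4) $\Rightarrow$ (E5): the key point is that $u\in C^0(\Rb;\mathrm{w\text{-}}L^1(\Rp))$ means $t\mapsto\int_0^\infty\varphi(x)u(x,t)\,dx$ is continuous for every $\varphi\in L^\infty(\Rp)$. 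I would first get this for $\varphi\in C^0(\Rb)$ with compact support directly from Proposition~\ref{weaksolpr} (since $m^b(r,\cdot)\in W^{1,1}(0,T)\subset C^0(I)$), then extend to general $\varphi\in L^\infty(\Rp)$ by a standard approximation argument: bound $\int_0^\infty|\varphi-\varphi_n|\,|u(x,t)|\,dx$ using $\|u\|_{L^\infty(0,T;L^1_1)}<\infty$, splitting the integral into $[0,R]$ (where truncated compactly supported approximations of $\varphi$ in $L^1$-sense suffice, using that bounded sets are dealt with by the finiteness of $M_0$) and $[R,\infty)$ (where the tail of $M_1$, hence of $M_0$ on that range, is controlled using the uniform $L^1_1$ bound and $x\ge R$). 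Finally $u(\cdot,0)=u_0$ in $L^1(\Rp)$ follows from the $r\to\infty$ limit of $m^b(r,0)=\int_0^r b(x)u_0(x)\,dx$, i.e. directly from the initial condition built into (W4).

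For the converse, I would assume (E1)--(E5) and derive (W4). Fix $b\in C^0(\Rb)$, $r>0$, $t\in I$. The target identity involves $\mathbf{1}_{[0,r]}b$, which is bounded with compact support but not smooth, so I cannot plug it into (E4) directly. The plan is to approximate: choose $\varphi_n\in C_0^\infty(\Rb)$ with $\varphi_n\to\mathbf{1}_{[0,r]}b$ boundedly, say $\varphi_n$ supported in $[0,r+1]$, uniformly bounded, and converging pointwise a.e. (e.g. mollify and truncate $\mathbf{1}_{[0,r]}b$). Applying (E4) to each $\varphi_n$ gives
\[
\int_0^\infty\varphi_n(x)u(x,t)\,dx=\int_0^\infty\varphi_n(x)u_0(x)\,dx+\int_0^t\int_0^\infty\varphi_n(x)Q[u](x,s)\,dx\,ds.
\]
On the left and the middle term, dominated convergence applies using $u(\cdot,t),u_0\in L^1$ and the uniform bound on $\varphi_n$ (the dominating function is $\sup_n\|\varphi_n\|_\infty\,\mathbf{1}_{[0,r+1]}|u(\cdot,t)|$, integrable since $u(\cdot,t)\in L^1_{\rm loc}$). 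For the double integral, I need $Q[u]\in L^1([0,r+1]\times[0,t])$, which is (E3)=(W3), so dominated convergence in $(x,s)$ applies as well; passing to the limit yields exactly the (W4) identity for this $b$ and $r$. One should also note that the pointwise-a.e. set of $x$ where $\varphi_n\to\mathbf{1}_{[0,r]}b$ fails has measure zero (only $x=r$ is a genuine issue if $b(r)\ne0$), so it does not affect any of the integrals against $L^1$ densities.

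The main obstacle I anticipate is the passage between the weak-continuity condition (E5) and the $W^{1,1}$-in-time regularity encoded in (W4) via Proposition~\ref{weaksolpr}: specifically, showing that testing against all compactly supported continuous $b$ (as in (W4)) already forces continuity of $t\mapsto\int_0^\infty\varphi u\,dx$ against \emph{all} $\varphi\in L^\infty(\Rp)$, which is the content of (E5). This requires the uniform-in-time $L^1_1$ bound to control both the large-$x$ tail and the approximation of a general bounded $\varphi$ by compactly supported continuous functions in the relevant (weak-$*$ against $u$) sense; the argument is standard but is the place where the hypotheses (W1) and (W3) are genuinely used rather than just matched up syntactically. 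Everything else is bookkeeping: matching (E1)--(E3) with (W1)--(W3), and the mollify-and-truncate approximation of indicator-times-continuous weights.
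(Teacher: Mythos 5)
Your proof of the equivalence actually asserted in Proposition~\ref{wsprop} is correct and essentially identical to the paper's: (W1)--(W3) match (E1)--(E3) verbatim; (W4) yields (E4) by choosing $r$ with $\supp(\varphi)\subset[0,r]$; and (E4) yields (W4) by testing against mollified truncations of $\mathbf{1}_{[0,r]}b$ and passing to the limit by dominated convergence, using $u(\cdot,t),u_0\in L^1$ and (E3) for the domination --- this is exactly the paper's argument with $b_{r,\varepsilon}=\eta_\varepsilon * b_r$, where $b_r$ is the even reflection of $b\mathbf{1}_{[0,r]}$.

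However, you have misread the statement: the proposition claims equivalence with (E1)--(E4) only, so (E5) plays no role and the second half of your forward direction is superfluous. Moreover, your sketch of (W4) $\Rightarrow$ (E5) contains a genuine gap. The tail estimate on $[R,\infty)$ via $x\ge R$ and the uniform $L^1_1$ bound is fine, but on the compact part you must make $\int_0^R|\varphi-\varphi_n|\,u(x,t)\,dx$ small \emph{uniformly in $t$} when approximating a general $\varphi\in L^\infty(\Rp)$ by continuous functions; a Lusin-type approximation controls this only through $\sup_t\int_{E_n}u(x,t)\,dx$ over sets $E_n$ of small measure, i.e.\ it requires uniform integrability of the family $\{u(\cdot,t)\}_{t\in I}$ on compacts, which the hypothesis $u\in L^\infty(0,T;L^1_1(\Rp))$ does not supply. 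Fortunately none of this is needed: delete the (E5) discussion (and the unused appeal to (E5) in the converse) and your argument proves the proposition as stated.
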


\begin{proof}
Since the conditions (W1), (W2), and (W3) for any $T>0$ are
equivalent to (E1), (E2), and (E3), respectively,
it is enough to show that (W1–4) implies (E4) and that
(E1–4) implies (W4).

We suppose that the conditions (W1–4) hold for any $T>0$.
Then, for any $t\in\Rb$ and $\varphi\in C^\infty_0(\Rb)$,
choosing $r>0$ such that $\supp(\varphi)\subset [0,r]$,
(E4) follows from (W4).

Conversely, suppose that the conditions (E1–4) hold.
Then, for any $b\in C^0(\Rb)$ and $r>0$,
define
\begin{align*}
b_{r,\varepsilon}(x)\coloneqq\eta_\varepsilon * b_r(x),\quad
b_r(x)\coloneqq
\begin{cases}
  b(x) & (0\leq x \leq r),\\
  b(-x) & (-r\leq x < 0), \\
  0 & (|x|>r),
\end{cases}
\end{align*}
where $\eta_\varepsilon$ is the Friedrichs mollifier.
Then, $b_{r,\varepsilon} \in C^\infty_0(\Rb)$ and
\begin{align*}
  \lim_{\varepsilon\to 0}
  \int_0^\infty b_{r,\varepsilon}(x) v(x)\,dx =\int_0^r b(x)v(x)\,dx
  \quad (v\in L^1_{\rm loc}(\Rb)).
\end{align*}

From (E4), for $t\in\Rb$,
\begin{align}\label{EW4}
  \int_{0}^{\infty}b_{r,\varepsilon}(x)u(x,t)\,dx
  =\int_{0}^{\infty}b_{r,\varepsilon}(x)u_0(x)\,dx
  +\int_{0}^{t}\int_{0}^{\infty}b_{r,\varepsilon}(x)Q[u](x,s)\,dxds.
\end{align}
Taking the limit $\varepsilon\to 0$ in \eqref{EW4}, and using the dominated convergence theorem (since $u(\cdot,t)$, $u_0$, and $\int_{0}^{t}Q[u](\cdot,s)\,ds\in L^1(0,\infty)$), we obtain (W4).
\end{proof}

Finally, for weak solutions $u$ of the SCE, we define the \emph{mass flux} $J[u]$ by
\begin{equation} \label{flux}
  J[u](x,t) \coloneqq \int^x_0 \int^\infty _{x-y} yK(y,z)u(y,t)u(z,t) \, dzdy
  \quad((x,t)\in\Rb \times I),
\end{equation}
with the convention that $J[u](0,t) \coloneqq 0$ for $t\in I$.

\begin{prop}\label{weaksolprop}
For weak solutions $u$ to the SCE on $I=[0,T]$, the following properties hold:
\begin{enumerate}[(1)]   
\item 
$J[u](x,t)<\infty
\quad (x\in\Rb,\text{a.e.}\;t\in I)$.
\item 
$\displaystyle J[u](x,t)=-\int_{0}^{x}yQ[u](y,t)\,dy
\quad (x\in\Rb,~\text{a.e.}\;t\in I)$.
\item 
$\displaystyle 
\int_{0}^{t}J[u](x,s)ds 
= \int_0^x y (u_0(y)-u(y,t))\,dy
= m_1(x,0)-m_1(x,t)
\quad (x\in\Rb,~t\in I)$.
\end{enumerate}
\end{prop}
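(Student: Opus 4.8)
The plan is to prove (2) first by a direct Fubini--Tonelli computation, and then obtain (1) and (3) as easy consequences. Since $K\ge 0$ and $u\ge 0$, every integrand occurring below is nonnegative, so Tonelli's theorem may be used freely with no a priori integrability check; moreover, applying Tonelli to $Q_1[u],Q_2[u]\in L^1_{\rm loc}(\Rb\times I)$ on the sets $[0,n]\times I$ and intersecting the resulting null sets over $n\in\N$ shows that $Q_1[u](\cdot,t),Q_2[u](\cdot,t)\in L^1_{\rm loc}(\Rb)$ for a.e.\ $t\in I$. I fix such a $t$ and an arbitrary $r>0$ (the case $r=0$ being the convention $J[u](0,t)=0$).

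For (2), I would compute $\int_0^r yQ_1[u](y,t)\,dy$ and $\int_0^r yQ_2[u](y,t)\,dy$ separately. The loss term is immediate: unfolding $Q_2[u]$ and applying Tonelli gives $\int_0^r yQ_2[u](y,t)\,dy=\int_0^r\int_0^\infty yK(y,z)u(y,t)u(z,t)\,dz\,dy$, which is finite because $y\le r$ on $[0,r]$, so it is bounded by $r\int_0^r Q_2[u](y,t)\,dy<\infty$. For the gain term, unfolding $Q_1[u]$ and using Tonelli yields $\tfrac12\iint_{\{0\le z\le y\le r\}}yK(y-z,z)u(z,t)u(y-z,t)\,dz\,dy$; the substitution $(y,z)\mapsto(y-z,z)$, which has Jacobian $1$ and maps the triangle $\{0\le z\le y\le r\}$ bijectively onto the simplex $\{y,z\ge 0,\ y+z\le r\}$, transforms this into $\tfrac12\iint_{\{y+z\le r\}}(y+z)K(y,z)u(y,t)u(z,t)\,dy\,dz$, and then the symmetry of $K$ and of the simplex under $(y,z)\leftrightarrow(z,y)$ lets me replace $(y+z)$ by $2y$, so $\int_0^r yQ_1[u](y,t)\,dy=\int_0^r\int_0^{r-y}yK(y,z)u(y,t)u(z,t)\,dz\,dy$, again finite. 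Subtracting the two finite quantities and recalling $Q[u]=Q_1[u]-Q_2[u]$ gives $-\int_0^r yQ[u](y,t)\,dy=\int_0^r\int_{r-y}^\infty yK(y,z)u(y,t)u(z,t)\,dz\,dy=J[u](r,t)$, which is exactly (2).

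Assertion (1) is then an immediate byproduct: $0\le J[u](r,t)\le\int_0^r yQ_2[u](y,t)\,dy\le r\int_0^r Q_2[u](y,t)\,dy<\infty$ for a.e.\ $t\in I$ and every $r\ge 0$. For (3), I would apply Proposition~\ref{weaksolpr} with the weight $b(y)=y\in C^0(\Rb)$: this gives $m_1(r,\cdot)=m^b(r,\cdot)\in W^{1,1}(0,T)\subset C^0(I)$ with $m_1(r,0)=\int_0^r yu_0(y)\,dy$ and, by (2), $\partial_t m_1(r,t)=\int_0^r yQ[u](y,t)\,dy=-J[u](r,t)$ for a.e.\ $t$; since $J[u](r,\cdot)$ is dominated by $r\int_0^r(Q_1[u]+Q_2[u])(\cdot,s)\,ds\in L^1(0,T)$, it lies in $L^1(0,T)$, and integrating the above relation in time yields $\int_0^t J[u](r,s)\,ds=m_1(r,0)-m_1(r,t)=\int_0^r y\bigl(u_0(y)-u(y,t)\bigr)\,dy$ for all $t\in I$.

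The only genuinely delicate point is in (2): carrying out the change of variables $(y,z)\mapsto(y-z,z)$ in the gain term, recognizing that $K$ and the simplex are symmetric so that $(y+z)$ collapses to $2y$, and keeping track of finiteness so that the difference $\int_0^r yQ_2[u]\,dy-\int_0^r yQ_1[u]\,dy$ legitimately equals $-\int_0^r yQ[u]\,dy=J[u](r,t)$. All of this is handled by nonnegativity (Tonelli) together with the crude bound $y\le r$ on $[0,r]$ and the hypothesis $Q_i[u]\in L^1_{\rm loc}$.
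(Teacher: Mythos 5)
Your proposal is correct and follows essentially the same route as the paper: the identity in (2) is obtained by the same change of variables $(y,z)\mapsto(y-z,z)$ together with the symmetry of $K$ (the paper runs the computation starting from $J[u]$ and splitting off the term $A=\int_0^x s\,Q_1[u](s,t)\,ds$, which is exactly your gain-term identity read in reverse), the bound $J[u](x,t)\le x\int_0^x Q_2[u](y,t)\,dy<\infty$ gives (1), and (3) follows from Proposition~\ref{weaksolpr} with $b(y)=y$. The only difference is organizational (you prove (2) first and deduce (1), the paper does the reverse), and your explicit handling of the a.e.\ $t$ issue and of the integrability needed to justify the subtraction is sound.
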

\begin{proof}
(1) From (W3), for a.e. $t\in I$, we estimate
\begin{align*}
J[u](x,t)
&=
\int_{0}^{x}yu(y,t)\left(\int_{x-y}^{\infty}K(y,z)u(z,t)\,dz\right) dy\\
&\le
x\int_{0}^{x}u(y,t)
\left(\int_{0}^{\infty}K(y,z)u(z,t)\,dz\right)dy
=
x\int_{0}^{x}Q_2[u](y,t)\,dy <\infty.
\end{align*}
\noindent (2) 
From the definition of $J[u]$, we calculate
\begin{align}
J[u](x,t)&=\int_{0}^{x}\int_{x-y}^{\infty}yK(y,z)u(y,t)u(z,t)\,dzdy\notag\\
&=\int_{0}^{x}\left(\int_{0}^{\infty}yK(y,z)u(y,t)u(z,t)\,dz-\int_{0}^{x-y}yK(y,z)u(y,t)u(z,t)\,dz\right)dy\notag\\
&=\int_{0}^{x}yQ_2[u](y,t)\,dy-A,\label{JA}
\end{align}
where
\begin{align*}
A&\coloneqq\int_0^x\int_{0}^{x-y}yK(y,z)u(y,t)u(z,t)\,dzdy\\
&=\int_{0}^{x}\int_{y}^{x}yK(y,s-y)u(y,t)u(s-y,t)\,dsdy\quad\mbox{(set $s=z+y$)}\\
&=\int_{0}^{x}\int_{0}^{s}yK(y,s-y)u(y,t)u(s-y,t)\,dyds.\quad\mbox{(Fubini)}
\end{align*}
Changing the integral variable in the last integral from $y$ to $z=s-y$:
\begin{align*}
A&=\int_{0}^{x}\int_{0}^{s}(s-z)K(s-z,z)u(s-z,t)u(z,t)\,dzds\\
&=\int_{0}^{x}\int_{0}^{s}sK(s-z,z)u(s-z,t)u(z,t)\,dzds-\int_{0}^{x}\int_{0}^{s}zK(s-z,z)u(s-z,t)u(z,t)\,dzds\\
&=\int_{0}^{x}2sQ_1[u](s,t)ds-A
\end{align*}
This implies 
$A=\int_{0}^{x}sQ_1[u](s,t)ds$, and substituting back into \eqref{JA}, we obtain
\begin{align*}
    J[u](x,t)&=\int_{0}^{x}\left(yQ_2[u](y,t)-yQ_1[u](y,t)\right)\,dy
    =-\int_{0}^{t}yQ[u](y,t)\,dy.
\end{align*}
\noindent
(3) From Proposition~\ref{weaksolpr} with $b(y)=y$, we have
\begin{align*}
\int_0^x y (u(y,t)-u_0(y))\,dy
&=m_1(x,t)-m_1(x,0)\\
&=
\int_{0}^{t} \left(\int_0^x yQ[u](y,s)\,dy\right) ds\\
&=
-\int_{0}^{t} J[u](x,s)\,ds,
\end{align*}
which yields the result.
\end{proof}

\section{Analytical properties of generalized moments}\label{sec:4}
\setcounter{equation}{0}
This section summarizes detailed properties of the (truncated) generalized moments.
We first introduce a well-known identity for generalized moments with compactly supported weight functions. Then we present a refined version—the truncated generalized moment identity—and provide some of its applications.
Although the generalized moment identity is not directly used in later sections, we include it here with a proof for the reader's convenience.

\begin{prop}[Generalized moment identity \cite{Pego2009}]
Let $u$ be a weak solution to the SCE on $I=[0,T]$.
Then, for any compact-support weight function $a\in C^0_0(\Rb )$, the following identity holds:
\begin{align*}
\frac{d}{dt}M^{a}(t)=\frac{1}{2}\int_{0}^{\infty}\int_{0}^{\infty}
\big(a(x+y)-a(x)-a(y)\big)K(x,y)u(x,t)u(y,t)dydx
\quad (\text{a.e.}~t\in I).
\end{align*}
\end{prop}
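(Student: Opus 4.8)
The strategy is to start from the truncated generalized moment identity of Proposition~\ref{weaksolpr}, applied with the weight $b = a$, and then pass to the limit $r\to\infty$. Since $a\in C^0_0(\Rb)$, choose $R>0$ with $\supp(a)\subset[0,R]$. For $r\ge R$ we have $m^a(r,t) = M^a(t)$, and Proposition~\ref{weaksolpr} gives $M^a\in W^{1,1}(0,T)$ with
\[
\frac{d}{dt}M^a(t) = \int_0^r a(x)Q[u](x,t)\,dx = \int_0^\infty a(x)\big(Q_1[u](x,t) - Q_2[u](x,t)\big)\,dx
\]
for a.e.\ $t\in I$, where the upper limit can be replaced by $\infty$ because $a$ vanishes outside $[0,R]$.

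The core of the argument is the symmetrization computation on the right-hand side. For the loss term, $\int_0^\infty a(x)Q_2[u](x,t)\,dx = \int_0^\infty\int_0^\infty a(x)K(x,y)u(x,t)u(y,t)\,dy\,dx$ directly. For the gain term, I would write
\[
\int_0^\infty a(x)Q_1[u](x,t)\,dx = \frac{1}{2}\int_0^\infty\int_0^x a(x)K(x-y,y)u(y,t)u(x-y,t)\,dy\,dx,
\]
substitute $s = x-y$ (so $x = s+y$, with the region $\{0\le y\le x\}$ becoming $\{s\ge 0,\ y\ge 0\}$), apply Fubini, and use $K(s,y)=K(y,s)$ to get $\frac12\int_0^\infty\int_0^\infty a(s+y)K(s,y)u(s,t)u(y,t)\,ds\,dy$. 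Finally, rewrite the loss integral in symmetric form using $\frac12\int\int\big(a(x)+a(y)\big)K(x,y)u(x,t)u(y,t)$ by the symmetry of $K$, and combine the two to obtain the claimed kernel $a(x+y)-a(x)-a(y)$.

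The main obstacle is \emph{justifying the manipulations rigorously} — Fubini's theorem and the reassembly of the integrals require absolute integrability of the double integrals, not just the existence of the iterated ones coming from (W3). Here I would exploit the compact support of $a$: the integrand $a(s+y)K(s,y)u(s,t)u(y,t)$ is supported in $\{s+y\le R\}\subset[0,R]^2$, on which $K\in L^1_{\rm loc}$ and $u(\cdot,t)\in L^1_{\rm loc}$ (in fact $u(\cdot,t)\in L^1_1(\Rp)$ by (W1)) give integrability — more carefully, one bounds $\int_0^\infty a(x)Q_1[u](x,t)\,dx$ by $\|a\|_\infty\int_0^\infty Q_1[u](x,t)\,dx$, and the latter is finite for a.e.\ $t$ by (W3) since $Q_1[u]\in L^1_{\rm loc}$ and $Q_1[u]$ is supported, after the change of variables, in a bounded region. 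The same compact-support bound $\int_0^\infty a(x)Q_2[u](x,t)\,dx\le \|a\|_\infty\int_0^R Q_2[u](x,t)\,dx<\infty$ for a.e.\ $t$ handles the loss term. Once absolute integrability is in hand, Fubini and the algebraic rearrangement are routine. I would also remark that the resulting identity is the special case $a$ compactly supported of the general moment identity, and that the passage to noncompactly-supported weights (used implicitly in later sections via truncation) is exactly why the truncated version of Proposition~\ref{weaksolpr} is the more fundamental tool.
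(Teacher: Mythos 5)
Your proposal is correct and follows essentially the same route as the paper's proof: fix $r$ (your $R$) so that $\supp(a)\subset[0,r]$, identify $M^a(t)$ with $m^a(r,t)$, invoke Proposition~\ref{weaksolpr}, and then symmetrize the gain term by the substitution $z=x-y$ together with Fubini and the loss term by the symmetry of $K$. Your additional care in justifying Fubini via (W3) and the compact support of $a$ is a reasonable elaboration of a step the paper leaves implicit, not a different argument.
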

\begin{proof}
We choose $r>0$ to satisfy $[0,r)\supset \supp (a)$.
Then, since $M^a(t)=m^a(r,t)$ holds, 
applying Proposition~\ref{weaksolpr}, for a.e. $t\in I$, we obtain
\begin{align*}
\frac{d}{dt}M^a(t)
=
\partial_tm^a(r,t)
=\int^\infty_0 a(x)Q[u](x,t)\, dx 
=I_1-I_2,
\end{align*}
where we set
\begin{align*}
I_j:=\int_{0}^\infty a(x)Q_j[u](x,t)\,dx
\quad (j=1,2).
\end{align*}
The integral $I_1$ can be transformed as follows:
\begin{align*}
I_1&=\frac{1}{2}\int^\infty_0 \int^x_0 a(x)K(x-y,y)u(x-y,t)u(y,t)\,dydx\\
&=\frac{1}{2}\int_0^\infty \int_y^\infty a(x)K(x-y,y)u(x-y,t)u(y,t)\,dxdy\quad\mbox{(Fubini)}\\
&=\frac{1}{2}\int_0^\infty \int_0^\infty a(z+y)K(z,y)u(z,t)u(y,t)\,dzdy\quad\mbox{(change variable from $x$ to $z=x-y$)}
\end{align*}

On the other hand, by the symmetry of $K$, 
\begin{align*}
I_2 = \int^\infty_0 \int^\infty_0 a(x)K(x,y)u(x,t)u(y,t)\,dxdy
= \int^\infty_0 \int^\infty_0 a(y)K(x,y)u(x,t)u(y,t)\,dxdy.
\end{align*}
Hence,
\begin{align*}
I_2 = \frac{1}{2}\int^\infty_0  \int^\infty_0 \left\{a(x)+a(y)\right\}K(x,y)u(x,t)u(y,t)\,dxdy.
\end{align*}
Combining the expressions,
\begin{align*}
\frac{d}{dt}M^a(t) =I_1-I_2=\frac{1}{2}\int^\infty_0\int^\infty_0\{a(x+y)-a(x)-a(y)\}K(x,y)u(x,t)u(y,t)\,dxdy.
\end{align*}
\end{proof}

Next, we refine this identity using the truncated generalized moments.
\begin{thm}[Truncated generalized moment identity] \label{TMI}
Let $u$ be a weak solution to the SCE on $I=[0,T]$.
Then, for any $b\in C^0(\Rb )$,
the following identity holds:
\begin{align*}
\partial_t m^b (r,t) = \frac{1}{2}\iint_{D(r)} \big(&b(x+y)-b(x)-b(y)\big)K(x,y)u(x,t)u(y,t)\,dxdy \\
&-\int^r_0 b(x)u(x,t)\int^\infty_{r-x} K(x,y)u(y,t)\,dydx\quad(r\in \Rb,~\text{a.e.}~t\in  I)
\end{align*}
where $D(r) \coloneqq \{(x,y)\mid0 \leq x \leq r, 0\leq y \leq r-x\}$.
\end{thm}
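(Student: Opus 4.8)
The plan is to compute $\partial_t m^b(r,t)$ directly from Proposition~\ref{weaksolpr} with weight $b$, and then manipulate the gain and loss integrals so that they combine into the symmetrized double integral over $D(r)$ plus a boundary term. By Proposition~\ref{weaksolpr}, for a.e. $t\in I$ we have $\partial_t m^b(r,t)=\int_0^r b(x)Q[u](x,t)\,dx=I_1-I_2$, where $I_1\coloneqq\int_0^r b(x)Q_1[u](x,t)\,dx$ and $I_2\coloneqq\int_0^r b(x)Q_2[u](x,t)\,dx$. The integrability needed to justify splitting and applying Fubini comes from (W3), exactly as in the proof of the Generalized moment identity above.

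First I would transform $I_1$. Writing out $Q_1$ and using Fubini, $I_1=\frac12\int_0^r\int_0^x b(x)K(x-y,y)u(x-y,t)u(y,t)\,dy\,dx$; swapping the order of integration gives $\frac12\int_0^r\int_y^r b(x)K(x-y,y)u(x-y,t)u(y,t)\,dx\,dy$, and the substitution $z=x-y$ yields $\frac12\iint_{D(r)} b(z+y)K(z,y)u(z,t)u(y,t)\,dz\,dy$, since the new region is exactly $\{0\le y\le r,\ 0\le z\le r-y\}=D(r)$. Next I would handle $I_2=\int_0^r\int_0^\infty b(x)K(x,y)u(x,t)u(y,t)\,dy\,dx$ by splitting the inner $y$-integral at $y=r-x$: the piece over $y\in[0,r-x]$ is $\iint_{D(r)} b(x)K(x,y)u(x,t)u(y,t)\,dy\,dx$, and the piece over $y\in(r-x,\infty)$ is precisely the claimed boundary term $\int_0^r b(x)u(x,t)\int_{r-x}^\infty K(x,y)u(y,t)\,dy\,dx$. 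Using the symmetry $K(x,y)=K(y,x)$ together with the symmetry of the region $D(r)$ under $(x,y)\mapsto(y,x)$, the first piece can be rewritten as $\frac12\iint_{D(r)}\big(b(x)+b(y)\big)K(x,y)u(x,t)u(y,t)\,dy\,dx$.

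Finally, subtracting, $I_1-I_2=\frac12\iint_{D(r)}\big(b(x+y)-b(x)-b(y)\big)K(x,y)u(x,t)u(y,t)\,dx\,dy-\int_0^r b(x)u(x,t)\int_{r-x}^\infty K(x,y)u(y,t)\,dy\,dx$, which is the asserted identity. I expect the only real subtlety to be bookkeeping: checking that the change of variables in $I_1$ maps the triangle $\{y\le x\le r\}$ onto $D(r)$ correctly, and confirming that the symmetrization step is legitimate — i.e. that each integrand is in $L^1(D(r))$ so that $D(r)$ may be relabeled under the swap $(x,y)\mapsto(y,x)$. Both follow from the local integrability hypotheses in (W3) (the kernel and the products $K u u$ restricted to the bounded set $D(r)$ and to $[0,r]\times[0,\infty)$ are integrable because $u(\cdot,t)\in L^1_1(\Rp)$ for a.e. $t$), so no new estimate is required beyond what already appears in the preceding proof. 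The splitting of the $y$-integral in $I_2$ at $r-x$ is the one genuinely new manipulation compared to the compact-support case, and it is what produces the boundary flux term.
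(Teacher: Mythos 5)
Your proposal is correct and follows essentially the same route as the paper's proof: both start from Proposition~\ref{weaksolpr}, transform $I_1$ by Fubini and the substitution $z=x-y$ onto $D(r)$, split $I_2$ over $D(r)$ and its complement $\{0\le x\le r,\ y>r-x\}$, and symmetrize the $D(r)$ part using the symmetry of $K$ and of the region. No substantive difference or gap to report.
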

\begin{proof}
From Proposition~\ref{weaksolpr}, 
for $b\in C^0(\Rb )$, $r\in\Rb$, and a.e. $t\in I$, 
\begin{align*}
\partial_t m^b(r,t)
=\int_{0}^{r} b(x)Q[u](x,t)\,dx 
=I_1-I_2,
\end{align*}
where
\begin{align*}
I_j:=\int_{0}^{r} b(x)Q_j[u](x,t)\,dx
\quad (j=1,2).
\end{align*}
We evaluate $I_1$:
  \begin{align*}
    I_1&=\frac{1}{2}\int_{0}^{r} b(x)\int_{0}^{x}K(x-y,y)u(y,t)u(x-y,t)\,dydx\\
    &=\frac{1}{2}\int^r_0 \int^r_y b(x)K(x-y,y)u(x-y,t)u(y,t)\,dxdy\quad\mbox{(Fubini)}\\
    &=\frac{1}{2}\int^r_0 \int^{r-y}_0 b(z+y)K(z,y)u(z,t)u(y,t)\,dzdy \quad\mbox{(change variable from $x$ to $z=x-y$)}\\
    &=\frac{1}{2}\iint_{D(r)} b(x+y)K(x,y)u(x,t)u(y,t)\,dxdy.\quad\mbox{(rename $z\to x$)}
  \end{align*}
  For $I_2$, define $E(r)\coloneqq \{(x,y)\mid0 \leq x \leq r, r-x< y\}$. Since $[0,r]\times \Rb$ is represented by the disjoint union of $D(r)$ and $E(r)$, we have
\begin{align*}
I_2 &=\int_{0}^{r}b(x)\int_{0}^{\infty}K(x,y)u(x,t)u(y,t)\,dydx\\
&=\iint_{D(r)\cup E(r)}  b(x)K(x,y)u(x,t)u(y,t)\,dydx \\
    &=\iint_{D(r)} \frac{b(x)+b(y)}{2}K(x,y)u(x,t)u(y,t)\,dxdy +\iint_{E(r)} b(x)K(x,y)u(x,t)u(y,t)\,dxdy,
  \end{align*}
where we used the identity:
\begin{align*}
    \iint_{D(r)}  b(x)K(x,y)u(x,t)u(y,t)\,dydx 
   = \iint_{D(r)}  b(y)K(x,y)u(x,t)u(y,t)\,dydx,
\end{align*}
which is derived from the symmetries of the region 
$D(r)$ and the coagulation kernel $K(x,y)$ with respect to $x$ and $y$.
Hence, we obtain
\begin{align*}
    \partial_t m^b(r,t) &=I_1 - I_2\\
    &=\frac{1}{2}\iint_{D(r)} \big(b(x+y)-b(x)-b(y)\big)K(x,y)u(x,t)u(y,t)\,dxdy \\
    &\quad-\int^r_0 b(x)u(x,t)\int^\infty_{r-x} K(x,y)u(y,t)\,dydx,
\end{align*}
for $r\in\Rb$ and a.e. $t\in I$.
\end{proof}

\begin{cor} \label{truncated moment identity cor}
  Let $u$ be a weak solution to the SCE on $I=[0,T]$. If $b\in C^0(\Rb;\Rb )$, then the following inequality holds:
  \begin{align*}
    \partial_t m^b(r,t)\leq\frac{1}{2}\iint_{D(r)}\big(b(x+y)-b(x)-b(y)\big)K(x,y)u(x,t)u(y,t)\,dxdy\quad
    (r\in \Rb,~\text{a.e.}\;t\in I).
  \end{align*}
Moreover, if $b$ is subadditive, then $\partial_t m^b(r,t)\leq 0$ holds for $r\in\Rb$ and a.e. $t\in I$.
\end{cor}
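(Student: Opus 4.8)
The plan is to obtain both assertions directly from the truncated generalized moment identity of Theorem~\ref{TMI}, using nothing beyond the signs of the quantities involved. First I would recall that, by that theorem, for $b\in C^0(\Rb)$, $r\in\Rb$, and a.e.\ $t\in I$,
\begin{align*}
\partial_t m^b(r,t) &= \frac{1}{2}\iint_{D(r)} \big(b(x+y)-b(x)-b(y)\big)K(x,y)u(x,t)u(y,t)\,dxdy \\
&\quad - \int^r_0 b(x)u(x,t)\int^\infty_{r-x} K(x,y)u(y,t)\,dy\,dx.
\end{align*}
The strategy is then simply to argue that the second term on the right is nonnegative, so dropping it yields the first claimed inequality, and that subadditivity of $b$ makes the remaining double integral nonpositive, yielding the ``moreover'' part.

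For the first inequality, I would note that the hypothesis $b\in C^0(\Rb;\Rb)$ means $b\ge 0$ on $\Rb$; together with $K\ge 0$ from \eqref{Kcond} and $u(\cdot,t)\ge 0$ a.e.\ from (W2), the term $\int^r_0 b(x)u(x,t)\int^\infty_{r-x} K(x,y)u(y,t)\,dy\,dx$ is a nonnegative number for a.e.\ $t\in I$. Subtracting it from the double integral over $D(r)$ can only decrease the value, which gives exactly $\partial_t m^b(r,t)\le \tfrac{1}{2}\iint_{D(r)}(b(x+y)-b(x)-b(y))K u u\,dxdy$. For the ``moreover'' statement, subadditivity gives $b(x+y)-b(x)-b(y)\le 0$ pointwise on $D(r)$; since $K\ge 0$ and $u\ge 0$, the integrand of the double integral is $\le 0$, hence that integral is $\le 0$, and combining with the first inequality we conclude $\partial_t m^b(r,t)\le 0$ for $r\in\Rb$ and a.e.\ $t\in I$.

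There is essentially no obstacle here: the corollary is a sign-chasing consequence of Theorem~\ref{TMI}. The only point that warrants a line of care is ensuring the ``dropped'' term is finite for a.e.\ $t$, so that the inequality is genuine rather than a comparison of two infinities; but this is already available, since it is dominated by $\big(\max_{[0,r]} b\big)\int^r_0 Q_2[u](x,t)\,dx$, which is finite for a.e.\ $t\in I$ by (W3), exactly as in the estimate used in the proof of Proposition~\ref{weaksolprop}(1). (Alternatively, the double integral over $D(r)$ is itself integrable in $t$, being the sum of $\partial_t m^b(r,\cdot)\in L^1(0,T)$, cf.\ Proposition~\ref{weaksolpr}, and this nonnegative dropped term.)
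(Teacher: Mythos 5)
Your argument is exactly the paper's: both assertions are read off from Theorem~\ref{TMI} by noting that the dropped boundary term is nonnegative (since $b\ge 0$, $K\ge 0$, $u\ge 0$) and that subadditivity makes the integrand over $D(r)$ nonpositive. The extra remark on finiteness of the dropped term is a reasonable touch of care, but the proof is the same sign-chasing consequence of the identity that the paper gives.
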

\begin{proof}
Since $b(x)\ge 0$, the first inequality follows directly from Theorem~\ref{TMI}. The second statement follows from the subadditivity condition: $b(x+y)-b(x)-b(y)\le 0$.
\end{proof}

\begin{prop}\label{Monotonically decreasing property of the moment}
Let $u$ be a weak solution to the SCE on $I=[0,T]$. 
We suppose that $b\in C^0(\Rb;\Rb)$ and $b$ is subadditive on $\Rb$.
If $M^b(0)<\infty$, then $M^b(t)$ is non-increasing in $t\in I$, that is, 
\begin{align}\label{MMM}
M^b(t)\le M^b(s)\le M^b(0)<\infty\quad (0\le s\le t\le T).
\end{align}
In particular, for $k=0$ or $k=1$, the moment $M_k(t)$ is non-increasing if $M_k(0)<\infty$. 
\end{prop}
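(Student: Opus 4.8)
The plan is to deduce \eqref{MMM} as a limiting consequence of Corollary~\ref{truncated moment identity cor} together with the monotone convergence theorem. The point is that the corollary already supplies all the analytic content at the level of the truncated moments: since $b\in C^0(\Rb;\Rb)$ is nonnegative and subadditive, it gives $\partial_t m^b(r,t)\le 0$ for every $r\in\Rb$ and a.e.\ $t\in I$. So there is essentially no hard estimate left to prove; the task is only to transfer monotonicity in $t$ from the truncated moments $m^b(r,\cdot)$ to the full moment $M^b(\cdot)$, while keeping track of finiteness.

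First I would fix $r\in\Rb$. By Proposition~\ref{weaksolpr}, $m^b(r,\cdot)\in W^{1,1}(0,T)$ and is therefore absolutely continuous on $I$, while by Corollary~\ref{truncated moment identity cor} its derivative is $\le 0$ a.e.; integrating gives $m^b(r,t)-m^b(r,s)=\int_s^t \partial_\tau m^b(r,\tau)\,d\tau\le 0$ for all $0\le s\le t\le T$. Next I would let $r\to\infty$. Because $b\ge 0$, $u(\cdot,t)\ge 0$, and $u_0\ge 0$, the map $r\mapsto m^b(r,t)$ is non-decreasing, so $M^b(t)=\lim_{r\to\infty}m^b(r,t)=\sup_{r>0}m^b(r,t)$ (and likewise with $u_0$ or $u(\cdot,s)$), and by monotone convergence this equals $\int_0^\infty b(x)u(x,t)\,dx$. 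Taking the supremum over $r$ in the pointwise-in-$r$ inequality $m^b(r,t)\le m^b(r,s)$ yields $M^b(t)\le M^b(s)$; applying this first with $s$ replaced by $0$ gives $M^b(t)\le M^b(0)=\int_0^\infty b(x)u_0(x)\,dx<\infty$ by hypothesis, and then the chain $M^b(t)\le M^b(s)\le M^b(0)<\infty$ follows. Finally, for $k=0$ the weight $b(x)\equiv 1$ and for $k=1$ the weight $b(x)=x$ are continuous, nonnegative, and subadditive (indeed additive when $k=1$), so the statement for $M_k$ is the corresponding special case.

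As for the main obstacle: there is very little. The only points needing care are (i) that $\partial_t m^b(r,t)\le 0$ holds merely for a.e.\ $t$, which is harmless because $m^b(r,\cdot)$ is absolutely continuous, so the sign of the a.e.-derivative controls the monotonicity exactly; and (ii) that the passage $r\to\infty$ is legitimate, which is precisely where the nonnegativity hypotheses $b\ge 0$ and $u\ge 0$ are used, since they make $r\mapsto m^b(r,t)$ monotone and hence permit monotone convergence and the interchange of the supremum with the inequality. This also explains why the hypothesis is stated as $b\in C^0(\Rb;\Rb)$ rather than $C^0(\Rb;\R)$: for a signed weight the limit defining $M^b(t)$ need not even exist.
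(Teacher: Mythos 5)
Your proposal is correct and follows essentially the same route as the paper: apply Corollary~\ref{truncated moment identity cor} to get $m^b(r,t)\le m^b(r,s)\le m^b(r,0)\le M^b(0)$ for $0\le s\le t$, then let $r\to\infty$, and finish by noting $b(x)=1$ and $b(x)=x$ are subadditive. You merely spell out two details the paper leaves implicit (absolute continuity of $m^b(r,\cdot)$ from Proposition~\ref{weaksolpr} to integrate the a.e.\ sign condition, and monotone convergence in $r$), which is fine.
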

\begin{proof}
By Corollary~\ref{truncated moment identity cor}, for $r\Rp$ and $t\in [0,T]$, we have
\[
m^b(r,t)\le m^b(r,s)\le m^b(r,0) \le M^b(0)<\infty.
\]
Taking the limit as $r\to \infty$ yields \eqref{MMM}. The final statement holds since $b(x)=1$ and $b(x)=x$ are subadditive.
\end{proof}

The following theorem describes the asymptotic behavior of the number of clusters $M_0(t)$ as $t\to\infty$,
and will be used in Section~\ref{sec:6}.
\begin{thm}\label{M0inf}
Let $u$ be a weak solution to the SCE on $\Rb$.
If $K(x,y)$ satisfies the following condition:
\begin{align}\label{Kr}
\inf_{x,y\geq r}K(x,y) >0\quad (r>0),
\end{align}
then it follows that
\begin{align}\label{limM0}
\lim_{t\to\infty}M_0(t)=0.
\end{align}
\end{thm}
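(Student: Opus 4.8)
The plan is to exploit the truncated generalized moment identity (Theorem~\ref{TMI}) with the constant weight $b\equiv 1$, which already encodes the decay of the cluster number, together with the uniform lower bound \eqref{Kr} on the kernel to force the decay to be complete. With $b\equiv 1$ we have $m^b(r,t)=m_0(r,t)$ and $b(x+y)-b(x)-b(y)=-1$, so Theorem~\ref{TMI} gives
\begin{align*}
\partial_t m_0(r,t) = -\frac12\iint_{D(r)} K(x,y)u(x,t)u(y,t)\,dxdy - \int_0^r u(x,t)\int_{r-x}^\infty K(x,y)u(y,t)\,dy\,dx,
\end{align*}
for a.e.\ $t$. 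Letting $r\to\infty$ (monotone convergence on the nonnegative integrands, the second term going to $0$ since the inner integral has shrinking domain $\emptyset$ in the limit) should yield
\begin{align*}
\frac{d}{dt}M_0(t) = -\frac12\int_0^\infty\int_0^\infty K(x,y)u(x,t)u(y,t)\,dxdy,
\end{align*}
at least after justifying the interchange of limit and derivative; since $M_0(0)<\infty$ (it follows from $u_0\in L^1_1(\Rp)$, and in fact $M_0$ is finite and non-increasing by Proposition~\ref{Monotonically decreasing property of the moment}), this can be put in integrated form $M_0(t)=M_0(0)-\frac12\int_0^t\!\!\int_0^\infty\!\!\int_0^\infty K u u\,dx\,dy\,ds$, which avoids differentiability issues entirely and shows the double integral is integrable in $s$ over $[0,\infty)$.

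Next I would fix a small $\delta>0$ and a large $r>0$ and bound the collision integral from below on the ``bulk'' region $\{x,y\ge r\}$: using $\kappa_r := \inf_{x,y\ge r}K(x,y)>0$ from \eqref{Kr},
\begin{align*}
\int_0^\infty\int_0^\infty K(x,y)u(x,t)u(y,t)\,dx\,dy \ge \kappa_r\left(\int_r^\infty u(x,t)\,dx\right)^2 = \kappa_r\big(M_0(t)-m_0(r,t)\big)^2.
\end{align*}
Since $M_1(t)\le M_1(0)<\infty$ and $m_0(r,t)=\int_0^r u(x,t)\,dx \le r^{-1}\int_0^r x u(x,t)\,dx$ is not quite what I want — rather $m_0(r,t)\le M_0(t)$ trivially, so I instead need the tail $M_0(t)-m_0(r,t)=\int_r^\infty u\,dx$. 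Here I use $\int_r^\infty u(x,t)\,dx \ge$ nothing useful from above; the right move is: if $M_0(t)$ does not tend to $0$ then $M_0(t)\ge 2\varepsilon$ for all $t$ (it is non-increasing), and choosing $r$ large enough that $m_0(r,0)$ — hence $m_0(r,t)\le m_0(r,0)$? no, $m_0(r,\cdot)$ need not be monotone. Instead: from $M_1(t)\le M_1(0)$, Markov's inequality gives $\int_r^\infty u(x,t)\,dx \le M_1(0)/r$, which is the wrong direction. The clean fix is to note $m_0(r,t)\le M_0(t) - \int_r^\infty u\,dx$ is trivial, and instead bound $\int_r^\infty u(x,t)\,dx = M_0(t) - m_0(r,t)$ where $m_0(r,t) \le r \cdot \sup\|u(\cdot,t)\|$ is unavailable; so the genuinely correct estimate is $\int_r^\infty u(x,t)\,dx \ge M_0(t) - m_0(r,t)$ and we bound $m_0(r,t)$ using only $m_0(r,t) \le M_0(t)$, which is circular. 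The actual argument: pick $r$ so large that $\int_0^r x u_0(x)\,dx$ is close to $M_1(0)$ is still the wrong moment. The resolution is to use $\int_r^\infty u(x,t)\,dx \le \frac1r M_1(t) \le \frac1r M_1(0)$ only tells us the tail is \emph{small}, so mass, hence number, concentrates near $x=0$; but near $x=0$ coagulation is slow. This means the region $\{x,y\ge r\}$ is \textbf{not} where the mass lives, so this naive lower bound fails, and that is exactly the main obstacle.

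\textbf{The main obstacle and how to get around it.} The difficulty is that the clusters could all shrink toward $x=0$ in the size variable — but physically coagulation only \emph{increases} sizes, so this cannot happen: mass cannot drift to small $x$. Concretely, I would instead estimate the collision integral on a region like $\{x\ge \rho\}$ for a \emph{small} $\rho>0$ chosen so that at least half the mass sits in $[\rho,\infty)$ at time $0$ — but again $m_1(\rho,\cdot)$ is not monotone. The correct tool is Proposition~\ref{weaksolprop}(3): $m_1(\rho,0)-m_1(\rho,t)=\int_0^t J[u](\rho,s)\,ds\ge 0$, so $m_1(\rho,t)\le m_1(\rho,0)$, i.e.\ $\int_0^\rho xu(x,t)\,dx \le \int_0^\rho x u_0(x)\,dx$, so the mass below $\rho$ is bounded uniformly in $t$ by something made small by choosing $\rho$ small. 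Hence $\int_\rho^\infty xu(x,t)\,dx \ge M_1(t) - \int_0^\rho x u_0\,dx$; but $M_1(t)$ itself might decay (gelation!), so I need $M_1(t)\ge c>0$ for all $t$. That follows because if $M_0(t)\not\to 0$ then $M_0(t)\ge 2\varepsilon$, and by Proposition~\ref{Monotonically decreasing property of the moment} applied to a carefully chosen subadditive weight — or more simply, $M_1(t)\ge \rho\int_\rho^\infty u(x,t)\,dx = \rho(M_0(t)-m_0(\rho,t))$ and $m_0(\rho,t)$ is controlled once we show mass below $\rho$ is tiny and no new number can appear below... this is getting delicate. The cleanest route: combine (i) number below $\rho$ is bounded by number below $\rho$ at time $0$ plus created number, but coagulation creates no clusters below $\min$ of two sizes, so actually $m_0(\rho,t)$ need not be monotone either. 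I expect the honest argument uses the weight $b(x)=\min(x,\rho)/\rho\wedge 1$ type cutoffs: take $b$ subadditive, continuous, $b\equiv 1$ on $[\rho,\infty)$, $b(0)=0$; then by Proposition~\ref{Monotonically decreasing property of the moment}, $M^b(t)\le M^b(0)$, i.e.\ (number above $\rho$) $+$ (weighted number below) is non-increasing, which combined with $M_0(t)\ge 2\varepsilon$ traps a definite mass in $[\rho,\infty)$, hence a definite collision rate $\ge \kappa_\rho\varepsilon^2$ for all $t$, contradicting integrability of the collision integral over $[0,\infty)$. So the plan is: (1) derive the integrated balance for $M_0$; (2) suppose $M_0(t)\ge 2\varepsilon>0$ for all $t$; (3) use subadditive cutoff weights plus Proposition~\ref{weaksolprop}(3) to show $\int_\rho^\infty u(x,t)\,dx\ge\varepsilon$ for suitable small $\rho$ and all $t$; (4) lower-bound the collision integral by $\kappa_\rho\varepsilon^2$ uniformly in $t$; (5) conclude $\int_0^\infty(\text{collision integral})\,ds=\infty$, contradicting step (1). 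The hard part is genuinely step (3) — controlling where the ``number'' resides — and the key realization that makes it work is that coagulation never manufactures small clusters out of large ones, so a fixed positive fraction of the number stays bounded away from $x=0$ in a quantitative sense captured by a subadditive weight.
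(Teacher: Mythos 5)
Your overall skeleton --- argue by contradiction, assume $M_0(t)\ge 2\varepsilon$ for all $t$, lower-bound the collision integral on $\{x,y\ge r_0\}$ using \eqref{Kr}, and deduce linear decay of $M_0$ --- is exactly the paper's strategy. But the step you yourself flag as ``genuinely the hard part'' (your step (3): a uniform-in-$t$ lower bound on the number of clusters of size $\ge r_0$) is left unresolved, and the reason is a concrete error: you assert twice that $m_0(r,\cdot)$ ``need not be monotone.'' It is monotone, and this is precisely the missing ingredient. Corollary~\ref{truncated moment identity cor} applied with the constant weight $b\equiv 1$ (nonnegative and subadditive, since $1\le 1+1$) gives $\partial_t m_0(r,t)\le 0$ for \emph{every} fixed $r\in\Rb$ and a.e.\ $t$, hence $m_0(r,t)\le m_0(r,0)$; intuitively, coagulation never produces a cluster smaller than either of the two that merged, so the count in $[0,r]$ only decreases. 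The paper then chooses $r_0$ \emph{small} (not large) so that $m_0(r_0,0)\le\varepsilon$, which is possible since $u_0\in L^1$, whence $M_0(t)-m_0(r_0,t)\ge 2\varepsilon-\varepsilon=\varepsilon$ for all $t$, and the contradiction follows from
\begin{align*}
M_0(0)-M_0(t)\ \ge\ \frac{k_0}{2}\int_0^t\big(M_0(s)-m_0(r_0,s)\big)^2\,ds\ \ge\ \frac{k_0\varepsilon^2}{2}\,t,
\qquad k_0\coloneqq\inf_{x,y\ge r_0}K(x,y)>0 .
\end{align*}
Note that \eqref{Kr} is assumed for every $r>0$ precisely so that $k_0>0$ even for small $r_0$; your detour through Markov's inequality with large $r$ was a red herring.

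The substitute mechanism you propose for step (3) does not close the gap: with a continuous subadditive cutoff $b$ satisfying $b(0)=0$ and $b\equiv 1$ on $[\rho,\infty)$, Proposition~\ref{Monotonically decreasing property of the moment} yields $\int_0^\rho b(x)u(x,t)\,dx+\int_\rho^\infty u(x,t)\,dx\le M^b(0)$, which is an \emph{upper} bound on the number above $\rho$ and controls nothing about $m_0(\rho,t)=\int_0^\rho u(x,t)\,dx$, since $b$ vanishes near the origin; it therefore cannot trap a definite amount of number in $[\rho,\infty)$. A further, more minor, point: rather than passing to the limit $r\to\infty$ in the differential identity to obtain an exact balance law for $M_0$ (your step (1), which requires justifying that the boundary term of Theorem~\ref{TMI} vanishes), the paper keeps the truncated differential \emph{inequality}, integrates in time first, and only then lets $r\to\infty$ by dominated convergence, using $(m_0(r,s)-m_0(r_0,s))^2\le M_0(0)^2$; this sidesteps the interchange issues you correctly worried about.
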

\begin{proof}
We proceed by contradiction. Suppose that \eqref{limM0} is not true.
Then, since $M_0(t)$ is nonnegative and non-increasing, there exists $\varepsilon >0$ such that
\begin{align*}
\lim_{t\to\infty}M_0(t)=2\varepsilon.
\end{align*}
There also exists $r_0>0$ such that
\begin{align*}
m_0(r_0,0) =\int^{r_0}_0 u(x,0)\,dx \leq\varepsilon.
\end{align*}
Let $k_0\coloneq \inf_{x,y\geq r_0}K(x,y)>0$.
Then for any $s\geq0$, using Corollary~\ref{truncated moment identity cor} and Proposition~\ref{Monotonically decreasing property of the moment}, we have
\begin{align*}
M_0(s)\geq\lim_{t\to\infty}M_0(t)=2\varepsilon>\varepsilon\geq m_0(r_0,0)\geq m_0(r_0,s)\geq0.
  \end{align*}
Thus,
  \begin{align}
    M_0(s)-m_0(r_0,s)\geq \varepsilon \quad (s\in \Rb).\label{eq:3}
  \end{align}
Now apply Theorem~\ref{TMI} with $b(x)=1$. For $r>r_0$, we obtain
  \begin{align*}
    -\partial_t m_0 (r,t) &=\frac{1}{2} \int^r_0\int^{r-x}_0 K(x,y)u(x,t)u(y,t)\,dydx \\
    &\quad+\int^r_0\int^\infty_{r-x} K(x,y)u(x,t) u(y,t)\,dydx \\
    &\geq\frac{1}{2} \int^r_0\int^{\infty}_0 K(x,y)u(x,t)u(y,t)\,dydx \\
    &\geq\frac{1}{2} \int^r_{r_0}\int^r_{r_0} K(x,y)u(x,t)u(y,t)\,dydx \\
    &\geq\frac{k_0}{2} \int^r_{r_0}\int^r_{r_0} u(x,t)u(y,t)\,dydx \\
    &=\frac{k_0}{2}(m_0(r,t)-m_0(r_0,t))^2.
  \end{align*}
  Integrating both sides from $0$ to $t$:
  \begin{align*}
    m_0 (r,0)-m_0 (r,t) \geq\frac{k_0}{2}\int_{0}^{t}(m_0(r,s)-m_0(r_0,s))^2\,ds.
  \end{align*}
  Taking the limit $r\to\infty$, we have 
  \begin{align*}
    M_0(0)-M_0(t)&\geq\frac{k_0}{2}\lim_{r\to\infty}\int_{0}^{t}(m_0(r,s)-m_0(r_0,s))^2\,ds \\
    &= \frac{k_0}{2}\int_{0}^{t}(M_0(s)-m_0(r_0,s))^2\,ds \quad\text{(Lebesgue's
    DCT)}\\
    &\geq \frac{k_0}{2}\int_{0}^{t}\varepsilon^2\,ds \quad \text{(by \eqref{eq:3})}\\
    &=\frac{k_0\varepsilon^2}{2}t.
  \end{align*}
  Hence, 
  \begin{align*}
    M_0(t)\leq M_0(0) - \frac{k_0\varepsilon^2}{2}t
    \quad (t\in \Rb).
  \end{align*}
This implies $M_0(t)<0$ for $t>2M_0(0)/(k_0\vep^2)$, a contradiction. Thus, we conclude \eqref{limM0}.
\end{proof}

\section{Criteria for mass conservation}\label{sec:5}
\setcounter{equation}{0}
In this section, we extend the conditions for mass conservation established in \cite{I-K-M2023} to weak solutions.

For $b\in C^0(\Rb;\Rb)$, we define
\begin{align*}
  K_1^b(x,y) &\coloneqq(b(x)+x+1)(b(y)+y+1), \\
  K_2^b(x,y) &\coloneqq(b(y)+y+1)(x+1)+(b(x)+x+1)(y+1).
\end{align*}
We impose the following assumptions on the coagulation kernel $K$:
\begin{itemize} 
\item[(A1)] 
${}^\exists C_1 >0 \; \mathrm{s.t.}\; (x+y)K(x,y)\leq C_1 K_1^b(x,y)\;(x,y\in\Rb)$.
\item[(A2)]
${}^\exists C_2 >0 \; \mathrm{s.t.}\; \big(b(x+y)-b(x)-b(y)\big)K(x,y)\leq C_2 K_2^b(x,y)\;(x,y\in\Rb)$.
\end{itemize}
\begin{lem} \label{Sahidul-Kimura-Lem1}
Let $u$ be a weak solution to the SCE on $I=[0,T]$. 
Suppose that there exists $b\in C^0(\Rb;\Rb)$ such that (A2) holds and $M^b(0)<\infty$.
Then, for all $t\in I$, we have 
\[
M^b(t)\le L(t),
\]
where
\[
\mu_0:=\|u_0\|_{L^1_1(\Rp)}=M_0(0)+M_1(0),\quad L(t):=(M^b(0)+\mu_0)
e^{C_2\mu_0t}\quad (t\in I).
\]
\end{lem}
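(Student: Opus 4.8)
The plan is to combine the truncated generalized moment identity (Theorem~\ref{TMI}) with assumption (A2) and a Gronwall argument, passing to the limit $r\to\infty$ at the end. First I would apply Theorem~\ref{TMI} to the weight $b$ itself and discard the last (nonpositive, since $b\ge 0$) term, obtaining
\[
\partial_t m^b(r,t)\le \frac12\iint_{D(r)}\big(b(x+y)-b(x)-b(y)\big)K(x,y)u(x,t)u(y,t)\,dxdy
\]
for $r\in\Rb$ and a.e.\ $t\in I$. Next I would invoke (A2) to bound the integrand by $C_2 K_2^b(x,y)u(x,t)u(y,t)$, extend the domain of integration from $D(r)$ to $\Rp\times\Rp$ (all terms are nonnegative), and split $K_2^b$ into its two symmetric pieces. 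Each piece factors, e.g.
\[
\iint (b(y)+y+1)(x+1)u(x,t)u(y,t)\,dxdy=\Big(\!\int (x+1)u(x,t)\,dx\Big)\Big(\!\int (b(y)+y+1)u(y,t)\,dy\Big)
=\big(M_0(t)+M_1(t)\big)\big(M^b(t)+M_0(t)+M_1(t)\big).
\]

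The key step is then to control the moments appearing on the right by time-independent or Gronwall-ready quantities. Since $u$ is a weak solution, $b\equiv 1$ and $b(x)=x$ are subadditive, so Proposition~\ref{Monotonically decreasing property of the moment} gives $M_0(t)+M_1(t)\le M_0(0)+M_1(0)=\mu_0$ for all $t\in I$; note also $M^b(t)\le M^b(0)+\mu_0\le L(t)$ is exactly what we want to prove, so $M^b$ must be kept as the unknown. Collecting terms, both symmetric pieces are bounded by $\mu_0\big(M^b(t)+\mu_0\big)$, hence
\[
\partial_t m^b(r,t)\le C_2\,\mu_0\big(M^b(t)+\mu_0\big)\qquad(\text{a.e. }t\in I).
\]
Integrating in time from $0$ to $t$ and using $m^b(r,0)\le M^b(0)$ gives $m^b(r,t)\le M^b(0)+C_2\mu_0\int_0^t (M^b(s)+\mu_0)\,ds$; letting $r\to\infty$ (monotone convergence, legitimate because $b\ge 0$) yields the same inequality with $m^b(r,t)$ replaced by $M^b(t)$. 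Setting $\phi(t):=M^b(t)+\mu_0$, this reads $\phi(t)\le M^b(0)+\mu_0+C_2\mu_0\int_0^t\phi(s)\,ds=\phi(0)+C_2\mu_0\int_0^t\phi(s)\,ds$, and Gronwall's inequality gives $\phi(t)\le\phi(0)e^{C_2\mu_0 t}$, i.e.\ $M^b(t)\le M^b(0)+\mu_0 \le (M^b(0)+\mu_0)e^{C_2\mu_0 t}=L(t)$, as claimed.

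The main obstacle—more a bookkeeping hazard than a deep difficulty—is the order of operations around the limit $r\to\infty$: one cannot apply Gronwall directly to $m^b(r,t)$ because the right-hand side involves the untruncated moment $M^b(s)$ (which appeared after enlarging $D(r)$ to $\Rp\times\Rp$), yet $M^b(t)<\infty$ is not known a priori. The clean way around this is to first integrate the differential inequality for $m^b(r,t)$ in time (keeping $r$ fixed), then pass $r\to\infty$ on the left by monotone convergence to get an integral inequality purely in $M^b$, and only then apply the integral form of Gronwall; one should also check that $t\mapsto M^b(t)$ is measurable (it is a pointwise monotone-in-$r$ limit of the continuous functions $m^b(r,\cdot)$ from Proposition~\ref{weaksolpr}) so that the integrals make sense. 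A minor point worth a sentence is that the factorization of the double integrals requires the finiteness of $M_0(t),M_1(t)$, which is guaranteed by (W1); once $M^b(t)\le L(t)<\infty$ is established, no circularity remains.
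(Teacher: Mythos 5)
Your overall strategy (truncated moment identity, assumption (A2), symmetrization, Gronwall, then $r\to\infty$) is the same as the paper's, but there is one genuine gap at the step where you enlarge the domain of integration. You extend $D(r)$ to all of $\Rp\times\Rp$, which makes the right-hand side of the differential inequality
\[
\partial_t m^b(r,t)\le C_2\,\mu_0\bigl(M^b(t)+\mu_0\bigr)
\]
involve the \emph{untruncated} moment $M^b(t)$, whose finiteness is precisely part of the conclusion. You correctly flag this hazard, but your proposed repair (integrate in $t$, let $r\to\infty$ on the left, then apply the integral form of Gronwall) does not close it: the integral Gronwall lemma requires the unknown to be finite, or at least locally integrable, to yield any conclusion. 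Indeed, the inequality $\phi(t)\le a+b\int_0^t\phi(s)\,ds$ with values in $[0,\infty]$ is satisfied by $\phi(0)=a$, $\phi(t)=+\infty$ for $t>0$, so from it alone one cannot deduce $\phi(t)\le a e^{bt}$. Since $b$ is not assumed subadditive here, Proposition~\ref{Monotonically decreasing property of the moment} gives no a priori control of $M^b(t)$, and the argument stalls.

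The paper avoids this by enlarging $D(r)$ only to the square $[0,r]\times[0,r]$, so that after factorization the bound reads
\[
\partial_t m^b(r,t)\le C_2\bigl(m^b(r,t)+m_1(r,t)+m_0(r,t)\bigr)\bigl(m_1(r,t)+m_0(r,t)\bigr)\le C_2\bigl(m^b(r,t)+\mu_0\bigr)\mu_0,
\]
where only the finite truncated moment $m^b(r,t)$ appears as the unknown (the $m_0,m_1$ factors are controlled by $\mu_0$ via Proposition~\ref{Monotonically decreasing property of the moment}, exactly as you do). For fixed $r$ the function $m^b(r,\cdot)$ lies in $W^{1,1}(0,T)$ by Proposition~\ref{weaksolpr}, so the linear differential inequality can be integrated with the factor $e^{-C_2\mu_0 t}$ to give the $r$-uniform bound $m^b(r,t)\le L(t)$, and only then is the limit $r\to\infty$ taken. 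If you keep the truncation on both sides of your inequality in this way, the rest of your argument goes through; everything else in your proposal (discarding the nonpositive boundary term since $b\ge0$, the splitting and factorization of $K_2^b$, the monotone convergence in $r$) matches the paper.
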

\begin{proof}
From Corollary~\ref{truncated moment identity cor}, for $r>0$ and $t\in I$, using the symmetry of $D(r)$ in $x$ and $y$, we obtain
\begin{align*}
\partial_t m^b (r,t) 
&\le
\frac{1}{2}\iint_{D(r)} \big(b(x+y)-b(x)-b(y)\big)K(x,y)u(x,t)u(y,t)\,dxdy \\
&\le
\frac{C_2}{2}\iint_{D(r)}\left\{(b(y)+y+1)(x+1)+(b(x)+x+1)(y+1)\right\}u(x,t)u(y,t)\,dxdy \\
&= C_2\iint_{D(r)}(b(x)+x+1)(y+1)u(x,t)u(y,t)\,dxdy \\
&\le 
C_2\int^r_0\int^r_0(b(x)+x+1)(y+1)u(x,t)u(y,t)\,dxdy \\
&= C_2(m^b (r,t) +m_1 (r,t) +m_0 (r,t))(m_1(r,t)+m_0(r,t)) \\
&\leq C_2(m^b(r,t) +M_1(0)+M_0(0))(M_1(0)+M_0(0)).
\quad\text{(from Proposition~\ref{Monotonically decreasing property of the moment})}\\
&= C_2(m^b(r,t) +\mu_0)\mu_0.
\end{align*}
Setting $p \coloneqq C_2 \mu_0$ and $q\coloneqq C_2\mu_0^2$, we get 
\begin{align*}
\partial_t m^b (r,t) \leq p m^b(r,t) + q
\quad (r\in\Rb,~t\in I).
\end{align*}
Then,
\begin{align*}
\partial_t (e^{-pt}m^b(r,t)) = e^{-pt}(\partial_t m^b(r,t) - pm^b(r,t))\leq qe^{-pt}.
\end{align*}
Integrating from $0$ to $t$ yields
\begin{align*}
e^{-pt}m^b(r,t)-m^b(r,0) \leq \int^t_0 qe^{-ps}\,ds = \frac{q}{p}(1-e^{-pt}).
\end{align*}
Hence,
\begin{align*}
m^b(r,t)&\leq m^b(r,0)e^{pt} + \frac{q}{p}(e^{pt}-1) 
\leq M^b(0)e^{pt} + \mu_0e^{pt}=L(t).
\end{align*}
Takking the limit as $r\to \infty$, 
we obtain $M^b(t)\leq L(t)$ for all $t\in I$.
\end{proof}
\begin{lem} \label{Sahidul-Kimura-Lem2}
Let $u$ be a weak solution to the SCE on $I=[0,T]$. Suppose there exists $b\in C^0(\Rb;\Rb)$ such that $M^b(0)<\infty$, and assumptions (A1) and (A2) hold.
Then, $J[u](r,t)<\infty$ for all $r\in\Rb$ and $t\in I$, and the following limits hold:
\begin{align*}
\lim_{r\to\infty}J[u](r,t)=0,\quad
\lim_{r\to\infty}\int^t_0 J[u](r,s)\,ds=0
\quad(t\in I).
\end{align*}
\end{lem}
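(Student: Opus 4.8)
The plan is to control the mass flux $J[u](r,t)$ defined in \eqref{flux} by \emph{decoupling} the double integral with the help of assumption (A1), and then to read off the pointwise finiteness and the two limits from the a priori moment bounds $M^b(t)\le L(t)$ (Lemma~\ref{Sahidul-Kimura-Lem1}) and $M_0(t)+M_1(t)\le\mu_0$ (Proposition~\ref{Monotonically decreasing property of the moment}), all of which hold for every $t\in I$. For the finiteness, note that since $z\ge0$ and $K\ge0$, assumption (A1) gives $yK(y,z)\le(y+z)K(y,z)\le C_1K_1^b(y,z)=C_1(b(y)+y+1)(b(z)+z+1)$; substituting into \eqref{flux} and extending the inner integration to $(0,\infty)$,
\begin{align*}
J[u](r,t)&\le C_1\left(\int_0^r(b(y)+y+1)u(y,t)\,dy\right)\left(\int_0^\infty(b(z)+z+1)u(z,t)\,dz\right)\\
&\le C_1\bigl(M^b(t)+M_1(t)+M_0(t)\bigr)^2\le C_1\bigl(L(t)+\mu_0\bigr)^2<\infty,
\end{align*}
which proves $J[u](r,t)<\infty$ for all $r\in\Rb$, $t\in I$, and — since $L$ is non-decreasing — also supplies a bound uniform in $r$ and in $t$ over any subinterval $[0,t]$.

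To show $\lim_{r\to\infty}J[u](r,t)=0$ for fixed $t$, I would split the outer integral in \eqref{flux} at $y=r/2$. On $\{0\le y\le r/2\}$ the inner integration runs over $z\ge r-y\ge r/2$, so that contribution is at most $C_1\bigl(M^b(t)+M_1(t)+M_0(t)\bigr)\int_{r/2}^\infty(b(z)+z+1)u(z,t)\,dz$; on $\{r/2\le y\le r\}$ the outer factor is a tail, so that contribution is at most $C_1\Bigl(\int_{r/2}^\infty(b(y)+y+1)u(y,t)\,dy\Bigr)\bigl(M^b(t)+M_1(t)+M_0(t)\bigr)$. Since $\int_0^\infty(b(z)+z+1)u(z,t)\,dz<\infty$ for each fixed $t$, both tail integrals vanish as $r\to\infty$ while the complementary factor stays bounded; hence $J[u](r,t)\to0$.

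For the integrated limit, the previous step gives $J[u](r,s)\to0$ as $r\to\infty$ for every $s\in I$, and the first step gives the uniform bound $J[u](r,s)\le C_1(L(t)+\mu_0)^2$ for all $r$ and all $s\in[0,t]$, which is a constant, hence integrable on the bounded interval $[0,t]$. Dominated convergence then yields $\int_0^tJ[u](r,s)\,ds\to0$. (Combined with Proposition~\ref{weaksolprop}(3), this says $M_1(0)-M_1(t)=\lim_{r\to\infty}\bigl(m_1(r,0)-m_1(r,t)\bigr)=0$, i.e.\ mass conservation, which is how the lemma feeds into Theorem~\ref{mainth1}.)

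The only genuinely delicate point is the bookkeeping in the splitting argument — correctly tracking the lower limit $r-y$ of the inner integral in each of the two regions and making sure (A1) is applied to $yK(y,z)$ rather than to $K(y,z)$ alone; everything else reduces to convergence of tails of integrable functions and a routine dominated-convergence argument. Note that assumption (A2) is used only indirectly here, through the bound $M^b(t)\le L(t)$ of Lemma~\ref{Sahidul-Kimura-Lem1}.
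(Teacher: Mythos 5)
Your proposal is correct and follows essentially the same route as the paper: bound $yK(y,z)\le (y+z)K(y,z)\le C_1K_1^b(y,z)$ via (A1), invoke $M^b(t)\le L(t)$ from Lemma~\ref{Sahidul-Kimura-Lem1} together with the monotone moment bounds, observe that on the flux's integration region one of the two variables must exceed $r/2$ so a tail integral appears, and finish with dominated convergence. The paper packages the splitting as a covering $E_0(r)\subset E_1(r)\cup E_1'(r)$ and uses the symmetry of $K$ to merge the two pieces into a single term with a factor $2$, but this is only a cosmetic difference from your split of the outer integral at $y=r/2$.
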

\begin{proof}
For $r\in\Rp$, define 
\begin{align*}
E_0(r) \coloneqq 
\{(x,y)\mid 0\leq x\leq r, r-x\leq y\},~~
E_1(r) \coloneqq 
\Rb \times [r/2, \infty),~~
E_1'(r) \coloneqq 
[r/2,\infty)\times \Rb.
\end{align*}
Since $E_0(r)\subset (E_1(r)\cup E_1'(r))$ and by assumption (A1), 
we estimate:
$J[u](r,t)$ for $t\in I$ is evaluated as follows:
\begin{align*}
J[u](r,t) 
&= 
\iint_{E_0(r)} xK(x,y)u(x,t)u(y,t)\,dxdy \\
&\leq 
\iint_{E_1(r)\cup E_1'(r)} (x+y)K(x,y)u(x,t)u(y,t)\,dxdy \\
&\leq
2 \iint_{E_1(r)} (x+y)K(x,y)u(x,t)u(y,t)\,dxdy \\
&\leq
2C_1 \int^\infty_{r/2} \int_0^\infty(b(x)+x+1)(b(y)+y+1)u(x,t)u(y,t)\,dxdy \\
&= 
2C_1 \left(M^b(t)+M_0(t)+M_1(t)\right)\int^\infty_{r/2} (b(y)+y+1)u(y,t)\, dy \\
&\le 2C_1 \left(L(t)+\mu_0\right)\int^\infty_{r/2} (b(y)+y+1)u(y,t)\, dy,
  \end{align*}
where $L(t)$ and $\mu_0$ are defined in Lemma~\ref{Sahidul-Kimura-Lem1}.
From this, obviously, $J[u](r,t)\le 2C_1(L(t)+\mu_0)^2<\infty$ and  
$\lim_{r\to \infty}J[u](r,t)=0$ follow.
Applying Lebesgue's dominated convergence theorem,
we get
\begin{align*}
    \lim_{r\to\infty} \int^t_0 J[u](r,s)\,ds = 0 \quad(t\in I).
\end{align*}
\end{proof}

\begin{thm}\label{mainth1}
Let $u$ be a weak solution to the SCE on $I=[0,T]$. Suppose that there exists $b\in C^0(\Rb;\Rb)$ such that $M^b(0)<\infty$, and assumptions (A1) and (A2) hold.
Then, for all $t\in I$, 
\[
M_1(t)=M_1(0),\quad M^b(t)<\infty.
\]
\end{thm}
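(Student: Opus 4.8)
The plan is to combine the two preceding lemmas with the mass-transfer identity in Proposition~\ref{weaksolprop}(3). The claim $M^b(t)<\infty$ is immediate: by Lemma~\ref{Sahidul-Kimura-Lem1}, assumption (A2) together with $M^b(0)<\infty$ gives $M^b(t)\le L(t)<\infty$ for all $t\in I$, so only the mass-conservation statement $M_1(t)=M_1(0)$ requires work.

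For mass conservation, I would start from Proposition~\ref{weaksolprop}(3), which states
\[
\int_0^t J[u](r,s)\,ds = m_1(r,0)-m_1(r,t)\qquad(r\in\Rb,\ t\in I).
\]
Letting $r\to\infty$, the right-hand side tends to $M_1(0)-M_1(t)$ (both moments are finite for a weak solution, since $u(\cdot,t)\in L^1_1(\Rp)$, so this is just monotone/dominated convergence applied to $m_1(r,\cdot)$). For the left-hand side, Lemma~\ref{Sahidul-Kimura-Lem2}—whose hypotheses are exactly (A1), (A2), and $M^b(0)<\infty$—gives $\lim_{r\to\infty}\int_0^t J[u](r,s)\,ds=0$. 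Equating the two limits yields $M_1(0)-M_1(t)=0$, i.e.\ $M_1(t)=M_1(0)$ for all $t\in I$.

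There is essentially no obstacle here: the substantive analytic estimates (the Gronwall-type bound on $m^b$ and the vanishing of the truncated flux) have already been carried out in Lemmas~\ref{Sahidul-Kimura-Lem1} and \ref{Sahidul-Kimura-Lem2}, and Proposition~\ref{weaksolprop}(3) supplies the bridge identity relating the flux to the loss of mass. The only point that needs a sentence of care is the passage $r\to\infty$ on the right-hand side of the identity: one should note that $m_1(r,0)\to M_1(0)<\infty$ and $m_1(r,t)\to M_1(t)<\infty$ by the definition of $M_1$ and (W1), which legitimizes taking the limit of the difference. Everything else is a direct citation of earlier results.
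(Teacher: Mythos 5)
Your proposal is correct and follows essentially the same route as the paper: the finiteness of $M^b(t)$ is read off from Lemma~\ref{Sahidul-Kimura-Lem1}, and mass conservation is obtained by passing to the limit $r\to\infty$ in the identity of Proposition~\ref{weaksolprop}(3) using the vanishing of $\int_0^t J[u](r,s)\,ds$ from Lemma~\ref{Sahidul-Kimura-Lem2}. Your extra remark justifying the limit $m_1(r,t)\to M_1(t)$ via (W1) is a sensible point of care that the paper leaves implicit.
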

\begin{proof}
From Proposition~\ref{weaksolprop}~(3), we have
\begin{align*}
 m_1(r,t)=m_1(r,0)-\int^t_0 J[u](r,t)\,ds
 \quad (r\in\Rb,~t\in I).
\end{align*}
Taking the limit as $r\to\infty$ and applying Lemma~\ref{Sahidul-Kimura-Lem2}, we obtain
  \begin{align*}
    M_1(t) =M_1(0)\quad(t\in I).
  \end{align*}
The finiteness of $M^b(t)$ follows directly from Lemma~\ref{Sahidul-Kimura-Lem1}.
\end{proof}

The above theorem shows that mass conservation holds under conditions (A1) and (A2). The following examples of coagulation kernels $K(x,y)$ satisfying these assumptions were presented in \cite{I-K-M2023}.
\begin{prop}[\cite{I-K-M2023}] Let $b\in C^0(\Rb;\Rb)$ and $c\in\Rb$ satisfy
\begin{align*}
b(x+y)-b(x)-b(y)+x+y+c\ge 0\quad (x,y\in\Rb),
\end{align*}
and define the coagulation kernel as
\begin{align*}
    K(x,y):=\frac{b(x)+b(y)}{b(x+y)-b(x)-b(y)+x+y+c}.
\end{align*}
If this $K(x,y)$ is continuous on $\Rb\times \Rb$,
then, it satisfies assumptions (A1) and (A2). 
\end{prop}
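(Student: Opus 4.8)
The plan is to verify assumptions (A1) and (A2) by a direct computation from the explicit formula for $K$. Throughout I write $D(x,y)\coloneqq b(x+y)-b(x)-b(y)+x+y+c$ for the denominator, so that $K(x,y)\,D(x,y)=b(x)+b(y)$, the standing hypothesis reads $D(x,y)\ge 0$, and continuity of $K$ forces $D(x,y)=0$ only at points where $b(x)+b(y)=0$. A few elementary facts will be used repeatedly: evaluating $D(0,0)\ge 0$ gives $c\ge b(0)\ge 0$, so $0\le x+y\le x+y+c$; since $b\ge 0$ one has $K_1^b(x,y)\ge(b(x)+1)(b(y)+1)\ge b(x)+b(y)$, $K_1^b(x,y)\ge(x+1)(y+1)>x+y$, and likewise $K_2^b(x,y)\ge b(x)+b(y)$; and since $x+y+c\ge 0$, $D(x,y)\ge b(x+y)-b(x)-b(y)$.

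For (A2) I would split on the sign of $b(x+y)-b(x)-b(y)$. If it is nonpositive, the left-hand side of (A2) is $\le 0\le C_2K_2^b(x,y)$ and nothing is to prove. If it is positive, then $0<b(x+y)-b(x)-b(y)\le D(x,y)$, so $(b(x+y)-b(x)-b(y))K(x,y)\le D(x,y)K(x,y)=b(x)+b(y)\le K_2^b(x,y)$, and (A2) holds with $C_2=1$.

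For (A1) I would begin with $(x+y)K(x,y)\le(x+y+c)K(x,y)$ and use the substitution $x+y+c=D(x,y)-(b(x+y)-b(x)-b(y))$ to obtain
\[
(x+y+c)K(x,y)=\big(b(x)+b(y)\big)-\big(b(x+y)-b(x)-b(y)\big)K(x,y).
\]
If $b(x+y)-b(x)-b(y)\ge 0$ the subtracted term is nonnegative, so $(x+y)K\le(x+y+c)K\le b(x)+b(y)\le K_1^b(x,y)$ and (A1) holds with $C_1=1$. Otherwise put $\delta\coloneqq b(x)+b(y)-b(x+y)>0$; then $D(x,y)=x+y+c-\delta$, so $0<\delta\le x+y+c$ and $(x+y+c)K=(b(x)+b(y))+\delta K$. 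If $\delta\le\tfrac12(x+y+c)$ then $D(x,y)\ge\tfrac12(x+y+c)\ge\delta$, hence $\delta K(x,y)=\delta(b(x)+b(y))/D(x,y)\le b(x)+b(y)$ and $(x+y)K\le(x+y+c)K\le 2\big(b(x)+b(y)\big)\le 2K_1^b(x,y)$; so (A1) holds with $C_1=2$ in this range.

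The remaining range $\delta>\tfrac12(x+y+c)$ — equivalently $0<D(x,y)<\tfrac12(x+y+c)$, i.e.\ the denominator of $K$ is comparatively small — is where I expect the main difficulty to lie. In it $b(x)+b(y)=\delta+b(x+y)>\tfrac12(x+y)$, so $x+y\le 2(b(x)+b(y))$ and
\[
(x+y)K(x,y)\le 2\big(b(x)+b(y)\big)K(x,y)=\frac{2\big(b(x)+b(y)\big)^2}{D(x,y)},
\]
and the task reduces to bounding $\big(b(x)+b(y)\big)^2/D(x,y)$ by a constant multiple of $K_1^b(x,y)=(b(x)+x+1)(b(y)+y+1)$. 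This is precisely the point at which the continuity hypothesis on $K$ must enter: it excludes $D(x,y)=0$ at points with $b(x)+b(y)\neq 0$, and, combined with the a priori growth restriction on $b$ obtained by iterating the constraint $D(x,y)\ge 0$ together with $b\ge 0$, it is meant to yield a lower bound on $D(x,y)$ strong enough to absorb the quadratic numerator. I expect this last case to be the only genuinely delicate step; once it is settled, assembling the cases gives (A1) with a suitable constant $C_1$, and the proposition follows.
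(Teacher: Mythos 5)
The paper itself contains no proof of this proposition --- it is quoted from \cite{I-K-M2023} without argument --- so there is no in-paper proof to compare against; I therefore assess your attempt on its own terms. Your verification of (A2) is complete and correct (with $C_2=1$), and so are the first two cases of (A1): the identity $(x+y+c)K=(b(x)+b(y))-(b(x+y)-b(x)-b(y))K$ gives $C_1=1$ when $b(x+y)-b(x)-b(y)\ge 0$, and $C_1=2$ when $\delta:=b(x)+b(y)-b(x+y)$ satisfies $0<\delta\le\tfrac12(x+y+c)$. That part of the write-up is clean.

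The remaining subcase $\delta>\tfrac12(x+y+c)$ is a genuine gap, and the route you propose for closing it cannot work: continuity of $K$ does not supply the lower bound on $D$ that you hope for, and in fact the required estimate is false under the hypotheses as stated. Take $c=1$ and any continuous $b$ with $0\le b(s)\le s$; then $D(x,y)\ge c=1$ everywhere, so $K$ is automatically continuous and every hypothesis of the proposition holds. Choose such a $b$ (a piecewise-linear sawtooth) with $b(3n)=3n$, $b(3n+1)=0$, $b(1)=0$. At $(x,y)=(3n,1)$ one has $\delta=3n>\tfrac12(3n+2)$, so this lies exactly in your unresolved subcase, while $D(3n,1)=2$, hence $(x+y)K(3n,1)=\tfrac12(3n+1)\cdot 3n\sim\tfrac92 n^2$, whereas $K_1^b(3n,1)=(6n+1)\cdot 2\sim 12n$; no constant $C_1$ can work. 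Note that here $D$ is bounded \emph{below} by $1$: the obstruction is not a small denominator but the fact that your reduced quantity $(b(x)+b(y))^2/D$ is quadratic in $b(x)$ while $K_1^b(x,y)$ is only linear in $b(x)$ for fixed $y$. The statement as transcribed is therefore missing a hypothesis from \cite{I-K-M2023}. Observe that both examples following the proposition satisfy $\sup_{x,y}\big(b(x)+b(y)-b(x+y)\big)<\infty$ (the supremum is $0$ for $b(x)=x^3$ and $1$ for $b(x)=e^x$); under such a condition your hard subcase forces $x+y$ into a bounded set, where boundedness of the continuous $K$ together with $K_1^b\ge 1$ finishes (A1). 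You should either import that (or an equivalent) hypothesis explicitly, or restrict your claim to the cases you have actually proved.
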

In \cite{I-K-M2023}, the authors also provided the following examples of mass-conserving coagulation kernels and corresponding weight functions. These functions are not sublinear but they satisfy the conditions of Theorem~\ref{mainth1}:
\begin{align*}
&b(x)=x^3,~K(x,y)=\frac{x^2-xy+y^2}{xy+1}~\left(K(x,0)=x^2~\text{:~quadratic growth}\right),\\
&b(x)=e^x,~K(x,y)=\frac{e^x+e^y}{(e^x-1)(e^y-1)+x+y+1}~\left(K(x,0)=\frac{e^x+1}{x+1}~\text{:~exponential growth}\right).
\end{align*}

As mentioned in the introduction, if $\alpha,\beta\in[0,1]$ and $\alpha+\beta \le 1$, then the coagulation kernel $K(x, y) = x^\alpha y^\beta+x^\beta y^\alpha$ is mass-conserving (see \cite{E-L-M-P2003}). Applying Theorem~\ref{mainth1} to such a kernel leads to the following result.
\begin{prop}\label{hk-prop}
Suppose $0\leq\alpha\leq\beta\leq1$ and $\alpha+\beta\leq1$. Then the coagulation kernel $K(x,y)=x^\alpha y^\beta +x^\beta y^\alpha$ satisfies the assumptions (A1) and (A2) with  $b(x)\coloneqq x^{\beta+1}$. In particular, any weak solution to the SCE is mass-conserving if $M^b(0)<\infty$.
\end{prop}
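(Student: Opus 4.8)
The plan is to verify that $K(x,y)=x^\alpha y^\beta+x^\beta y^\alpha$ and the weight $b(x)=x^{\beta+1}$ satisfy assumptions (A1) and (A2); since $M^b(0)<\infty$ is part of the hypotheses, Theorem~\ref{mainth1} then gives $M_1(t)=M_1(0)$ and $M^b(t)<\infty$ for all $t\in I$, which is the assertion. Both (A1) and (A2) are pointwise polynomial-type inequalities on $\Rb\times\Rb$, so the argument is entirely elementary. I would use two simple facts throughout: first, for $0\le\gamma\le\delta$ and $s\ge 0$ one has $s^\gamma\le 1+s^\delta$ (consider $s\le 1$ and $s\ge 1$ separately); second, expanding $K_1^b(x,y)=(x^{\beta+1}+x+1)(y^{\beta+1}+y+1)$ and $K_2^b(x,y)=(y^{\beta+1}+y+1)(x+1)+(x^{\beta+1}+x+1)(y+1)$ shows that each of the monomials $1,\ x,\ x^{\beta+1},\ xy,\ x^{\beta+1}y,\ xy^{\beta+1}$ and their images under $x\leftrightarrow y$ is dominated by $K_1^b$ (resp.\ by $K_2^b$); hence it is enough to bound the quantities in (A1) and (A2) by finite sums of such monomials.

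For (A1), expand $(x+y)K(x,y)=x^{1+\alpha}y^\beta+x^{1+\beta}y^\alpha+x^\alpha y^{1+\beta}+x^\beta y^{1+\alpha}$. Since $\alpha\le\beta$, every exponent appearing here lies in $[0,\beta+1]$, so the first fact gives $x^ay^b\le(1+x^{\beta+1})(1+y^{\beta+1})\le K_1^b(x,y)$ for each of the four monomials; summing yields (A1) with $C_1=4$.

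For (A2), the key ingredient is the elementary inequality
\[
(x+y)^{\beta+1}-x^{\beta+1}-y^{\beta+1}\ \le\ (\beta+1)\,\min(x,y)\,\max(x,y)^\beta\qquad(x,y\ge 0).
\]
By homogeneity and symmetry it suffices to prove $g(t):=(1+t)^{\beta+1}-t^{\beta+1}-1\le(\beta+1)t$ for $0\le t\le 1$; this holds because $g(0)=0$ and $g'(t)=(\beta+1)\bigl((1+t)^\beta-t^\beta\bigr)\le\beta+1$ when $\beta\le 1$. Now (A2) is symmetric in $x,y$, so assume $x\le y$; the left-hand side of (A2) is then at most $(\beta+1)\,xy^\beta K(x,y)=(\beta+1)\bigl(x^{1+\alpha}y^{2\beta}+x^{\beta+1}y^{\alpha+\beta}\bigr)$. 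For the first monomial, $x\le y$ gives $x^{1+\alpha}\le xy^\alpha$, so $x^{1+\alpha}y^{2\beta}\le xy^{\alpha+2\beta}$; and since $\alpha+2\beta\le\beta+1$ — which is exactly the hypothesis $\alpha+\beta\le 1$ — the first fact gives $xy^{\alpha+2\beta}\le x(1+y^{\beta+1})=x+xy^{\beta+1}$, both monomials present in $K_2^b$. For the second monomial, $\alpha+\beta\le 1$ gives $y^{\alpha+\beta}\le 1+y$, so $x^{\beta+1}y^{\alpha+\beta}\le x^{\beta+1}+x^{\beta+1}y$, again both in $K_2^b$. This proves (A2), e.g.\ with $C_2=4(\beta+1)$.

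The only delicate point is the choice of estimate in (A2): the cruder convexity bound $(x+y)^{\beta+1}\le 2^\beta(x^{\beta+1}+y^{\beta+1})$ is insufficient, since multiplying it by $K$ produces a monomial such as $x^{2\beta+1}y^\alpha$ whose $x$-exponent $2\beta+1$ exceeds $\beta+1$ for $\beta<1$ and therefore cannot be controlled by $K_2^b$ on all of $\Rb\times\Rb$. Keeping the mixed factor $\min(x,y)\max(x,y)^\beta$ (equivalently $x^\beta y+xy^\beta$) is what confines all exponents to the admissible range, and the hypothesis $\alpha+\beta\le 1$ is precisely what makes the surviving exponents $\alpha+2\beta$ and $\alpha+\beta$ stay within $[0,\beta+1]$; the rest is routine monomial bookkeeping.
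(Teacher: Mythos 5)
Your proposal is correct and follows essentially the same route as the paper: both proofs reduce (A1) to the observation that every monomial in $(x+y)K(x,y)$ has exponents in $[0,\beta+1]$ and is therefore dominated by $K_1^b$, and both prove (A2) by controlling the superadditivity defect $(x+y)^{\beta+1}-x^{\beta+1}-y^{\beta+1}$ by a mixed term of the form $\min(x,y)\max(x,y)^\beta$ and then invoking $\alpha+\beta\le 1$ to keep the surviving exponents at most $\beta+1$. The only difference is cosmetic: the paper obtains that defect bound by substituting $r=y/x$ and estimating $f(r)=(r+1)^{\beta+1}-r^{\beta+1}\le(\beta+1)2^\beta r^\beta$ via an integral representation (incurring a harmless factor $2^\beta$), whereas you prove the cleaner inequality $(x+y)^{\beta+1}-x^{\beta+1}-y^{\beta+1}\le(\beta+1)\min(x,y)\max(x,y)^\beta$ directly by a one-variable derivative argument; your closing remark about why the crude convexity bound $(x+y)^{\beta+1}\le 2^\beta(x^{\beta+1}+y^{\beta+1})$ would fail correctly identifies the point of the argument.
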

\begin{proof}
For (A1), observe that
\begin{align*}
K_1^b(x,y) 
    &= (x^{\beta+1}+x+1)(y^{\beta+1}+y+1)\\
    &=x^{\beta+1}y^{\beta+1} + x^{\beta+1}y + xy^{\beta+1}  + x^{\beta+1}  + y^{\beta+1} + xy + x  + y + 1.
  \end{align*}
Using the inequality $x^p<x^q +1$ for $0\leq p\leq q$ and $x\in\Rb$, we estimate:
\begin{align*}
(x+y)K(x,y)&=x^{\alpha+1}y^\beta+x^{\beta+1}y^\alpha+x^\alpha y^{\beta+1}+x^\beta y^{\alpha+1}\\
    &< x^{\alpha+1}(y+1)+x^{\beta+1}(y+1)+(x+1)y^{\beta+1}+(x+1)y^{\alpha+1} \;(\because\beta\leq 1) \\
    &< (x^{\beta+1}+1)(y+1)+x^{\beta+1}(y+1)+(x+1)y^{\beta+1}+(x+1)(y^{\beta+1}+1) \\
    &=  2 x y^{\beta + 1}+ 2 x^{\beta + 1}y+2 x^{\beta + 1} + 2 y^{\beta + 1} + x + y + 2 \\
    &\leq 2 K_1^b(x,y).
  \end{align*}

For (A2), note that
 \begin{align*}
K_2^b(x,y)
=(y^{\beta+1}+y+1)(x+1)+(x^{\beta+1}+x+1)(y+1) > y^{\beta+1}(x+1).
\end{align*}
The case $x=0$ or $y=0$ is easy to verify, so we assume $0<x\le y$. 
Let $r:=y/x\ge 1$, then:
    \begin{align*}
    ({b(x+y)-b(x)-b(y)}) K(x,y)
    &=((x+y)^{\beta+1}-x^{\beta+1}-y^{\beta+1})(x^\alpha y^\beta + x^\beta y^\alpha)\\
    &=x^{\alpha+2\beta+1}\left( \left(r+1\right)^{\beta+1}-1-r^{\beta+1}\right)\left(r^{\alpha}+r^{\beta}\right)\\
    &=x^{\alpha+2\beta+1}\left(f(r)-1\right)(r^\alpha + r^\beta) \\
    &\leq 2x^{\alpha+2\beta+1}f(r)r,
  \end{align*}
where
$f(r)\coloneq (r+1)^{\beta+1} - r^{\beta+1}$.
Using the estimate:
\begin{align*}
  f(r) = \int_{r}^{r+1}(\beta +1)t^\beta\,dt 
  \leq (\beta +1)(r+1)^\beta 
  =(\beta +1)\left(\frac{r+1}{r}\right)^\beta r^\beta 
  \leq(\beta +1)2^\beta r^\beta,
\end{align*}
and applying $x^{\alpha+\beta}\leq 1+x$, we get:
\begin{align*}
  ({b(x+y)-b(x)-b(y)}) K(x,y)
  &\leq 
  (\beta +1)2^{\beta+1} x^{\alpha+2\beta+1}r^{\beta+1} \\
  &\leq(\beta +1)2^{\beta+1} (1+x)x^{\beta+1}r^{\beta+1} \\
  &=(\beta +1)2^{\beta+1}(1+x)y^{\beta+1} \\
  &< (\beta +1)2^{\beta+1} K_2^b(x,y).
\end{align*}
\end{proof}

\section{Criteria for gelation}\label{sec:6}
\setcounter{equation}{0}

The aim of this section is to derive sufficient conditions for gelation to occur using the generalized moment method.
We will make the following assumption on the weight function $b$ and the coagulation kernel $K$:
\begin{itemize} 
\item[(A3)] 
${}^\exists \lambda >0,\;{}^\exists \mu\ge 0\; \mathrm{s.t.}\; \big(b(x+y)-b(x)-b(y)\big)K(x,y)\le -\lambda xy+\mu (x+y+1)\;(x,y\in\Rb)$
\end{itemize}
\begin{thm}\label{mainth}
Suppose that the coagulation kernel $K(x,y)$ satisfies the condition in \eqref{Kr}, and that the weight function $b\in C^0(\Rb;\Rb)$ is non-decreasing on $\Rb$. If condition (A3) is satisfied, then the weak solution $u(x,y)$ of the SCE on $\Rb$ with $u_0\not\equiv 0$ and $M^b(0)<\infty$ undergoes gelation. That is, there exists $T>0$ such that $M_1(t)<M_1(0)$ for $t>T$. 
\end{thm}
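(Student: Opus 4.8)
The plan is to feed assumption (A3) into the truncated moment inequality of Corollary~\ref{truncated moment identity cor}, integrate in time, let $r\to\infty$, and then use Theorem~\ref{M0inf} to produce a contradiction with the hypothesis that mass is conserved.

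\textbf{Reduction.} Since $u_0\ge 0$, $u_0\not\equiv 0$ and $u_0\in L^1_1(\Rp)$, we have $M_1(0)>0$; and by Proposition~\ref{Monotonically decreasing property of the moment} both $M_0$ and $M_1$ are non-increasing. Consequently, the failure of the conclusion — that there is no $T>0$ with $M_1(t)<M_1(0)$ for $t>T$ — is equivalent to $M_1(t)\equiv M_1(0)$ on $[0,\infty)$. It therefore suffices to rule this out.

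\textbf{The moment estimate.} Because $b\in C^0(\Rb;\Rb)$, Corollary~\ref{truncated moment identity cor} applies, and combined with (A3) it gives, for a.e.\ $t$,
\[
\partial_t m^b(r,t)\ \le\ \frac12\iint_{D(r)}\bigl(b(x+y)-b(x)-b(y)\bigr)K(x,y)u(x,t)u(y,t)\,dx\,dy\ \le\ \frac12\iint_{D(r)}\bigl(-\lambda xy+\mu(x+y+1)\bigr)u(x,t)u(y,t)\,dx\,dy .
\]
As $m^b(r,\cdot)\in W^{1,1}(0,T)$ (Proposition~\ref{weaksolpr}), I would integrate this over $[0,t]$. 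The $\mu$-part of the right side is bounded above, uniformly in $r$ and $s$, by $\frac{\mu}{2}\bigl(2M_1(0)M_0(0)+M_0(0)^2\bigr)$ (monotonicity of the moments), while $-\lambda xy\le 0$; this already gives $M^b(t)=\lim_{r\to\infty}m^b(r,t)<\infty$. Letting $r\to\infty$: on the left $m^b(r,t)\to M^b(t)$ and $m^b(r,0)\to M^b(0)$ by monotone convergence; on the right, $D(r)$ increases to $\Rb\times\Rb$, so for each $s$ monotone convergence gives $\iint_{D(r)}xy\,u(x,s)u(y,s)\,dx\,dy\to M_1(s)^2$ and $\iint_{D(r)}(x+y+1)u(x,s)u(y,s)\,dx\,dy\to 2M_1(s)M_0(s)+M_0(s)^2$, and since these families are dominated in $r$ by the constants $M_1(0)^2$ and $2M_1(0)M_0(0)+M_0(0)^2$, dominated convergence handles the $s$-integral. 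Using $M^b(t)\ge 0$ together with $M_1(s)\le M_1(0)$ and $M_0(s)\le M_0(0)$, I obtain
\[
\frac{\lambda}{2}\int_0^t M_1(s)^2\,ds\ \le\ M^b(0)+\frac{\mu}{2}\bigl(2M_1(0)+M_0(0)\bigr)\int_0^t M_0(s)\,ds\qquad(t\ge 0).
\]

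\textbf{Conclusion and main obstacle.} Suppose now $M_1(t)\equiv M_1(0)$. Then the left-hand side above equals $\frac{\lambda}{2}M_1(0)^2\,t$, whereas condition~\eqref{Kr} and Theorem~\ref{M0inf} force $M_0(s)\to 0$, so $\frac1t\int_0^t M_0(s)\,ds\to 0$ and the right-hand side is $o(t)$. Since $M_1(0)>0$ this is impossible for $t$ large; hence $M_1\not\equiv M_1(0)$, and by the Reduction step there is $T>0$ with $M_1(t)<M_1(0)$ for all $t>T$. The step I expect to be the main obstacle is the $r\to\infty$ passage inside the time integral of the $-\lambda xy$ term: one must verify that $\iint_{D(r)}xy\,u(x,s)u(y,s)\,dx\,dy$ increases pointwise in $s$ to $M_1(s)^2$ and is uniformly dominated by the integrable constant $M_1(0)^2$, so that dominated convergence is legitimate; everything else is routine bookkeeping with the a-priori monotone bounds on $M_0$ and $M_1$. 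I do not expect to need the monotonicity of $b$ beyond the sign condition $b\ge 0$ already used.
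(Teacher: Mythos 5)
Your proposal is correct and follows essentially the same route as the paper: feed (A3) into the truncated generalized moment inequality, integrate in time, pass to the limit $r\to\infty$ with monotone/dominated convergence, and contradict mass conservation by playing the linear growth of $\frac{\lambda}{2}M_1(0)^2 t$ against the $o(t)$ term coming from $M_0(s)\to 0$ (Theorem~\ref{M0inf}). The only cosmetic differences are that you discard the boundary term of Theorem~\ref{TMI} outright via $b\ge 0$ (the paper instead absorbs it using the monotonicity of $b$ through $b(x)\ge\tfrac12(b(x)+b(y)-b(x+y))$, which lets it work on $[0,r]\times[0,\infty)$ at finite $r$), and that you finish with a Ces\`aro-average argument rather than choosing an explicit $t_*$ with $N(t)\le\tfrac{\lambda}{4}M_1(0)^2$.
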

\begin{proof}
we begin by noting that $b(x)\ge 0$ and $b(y)\le b(x+y)$,
which implies the inequality:
\begin{align*}
b(x)\ge \frac{1}{2}b(x)\ge \frac{1}{2}(b(x)+b(y)-b(x+y)).
\end{align*}
Applying this in Theorem~\ref{TMI}, we have
\begin{align*}
\partial_t m^b (r,t) &=\frac{1}{2} \int^r_0\int^{r-x}_0 
(b(x+y)-b(x)-b(y))K(x,y)u(x,t)u(y,t)\,dydx \\
&\quad -\int^r_0\int^\infty_{r-x} b(x)K(x,y)u(x,t) u(y,t)\,dydx \\
&\leq \frac{1}{2}\int^r_0\int^{\infty}_0 
(b(x+y)-b(x)-b(y))K(x,y)u(x,t)u(y,t)\,dydx \\
&\leq \frac{1}{2}\int^r_0\int^{\infty}_0 
(-\lambda xy+\mu (x+y+1))u(x,t)u(y,t)\,dydx \\
&=-\frac{\lambda}{2}m_1(r,t)M_1(t)
+\frac{\mu}{2}\left(
m_1(r,t)M_0(t)+m_0(r,t)M_1(t)
+m_0(r,t)M_0(t)\right)\\
&\le 
-\frac{\lambda}{2}m_1(r,t)M_1(t)
+\frac{\mu}{2}M_0(t)(2M_1(t)+M_0(t))\\
&\le 
-\frac{\lambda}{2}m_1(r,t)M_1(t)
+N(t),
\end{align*}
where we define
\begin{align*}
N(t):=\frac{\mu}{2}M_0(t)(2M_1(0)+M_0(0))
\end{align*}
Integrating both sides from $0$ to $t$,
we obtain:
\begin{align}\label{mbtN}
m^b (r,t)-m^b (r,0)
&\le
-\frac{\lambda}{2}\int_{0}^{t}
m_1(r,s)M_1(s)\,ds
+\int_0^tN(s)\,ds.
\end{align}
Since 
\begin{align*}
m^b (r,t)\le m^b (r,0) +\int_0^tN(s)\,ds\le M^b(0)+\int_0^tN(s)\,ds,
\end{align*}
it follows that $M^b(t)< M^b(0)+\int_0^tN(s)\,ds<\infty$.

Now, taking the limit as $r\to\infty$ in \eqref{mbtN} and applying Lebesgue's dominated convergence theorem, we obtain
\begin{align}\label{MMN}
M^b(t)-M^b(0)&\leq
-\frac{\lambda}{2}\int_{0}^{t}
M_1(s)^2\,ds
+\int_0^tN(s)\,ds.
\end{align}

To prove that gelation occurs, we proceed by contradiction. 
Assume that gelation does not occur, i.e., mass is conserved such that $M_1(t)=M_1(0)>0$ for all $t\in\Rb$.
Since $N(t)\to 0$ as $t\to\infty$ by Theorem~\ref{M0inf}, there exists a time $t_*>0$ such that
$N(t)\le \frac{\lambda}{4}M_1(0)^2$ for $t\ge t_*$.
Substituting this into \eqref{MMN} for $t>t_*$, we get:
\begin{align}
M^b(t)&\le M^b(0)-\frac{\lambda}{2}M_1(0)^2t
+\int_0^{t_*}N(s)\,ds+\int_{t_*}^tN(s)\,ds\label{fl}\\
&\le
M^b(0)-\frac{\lambda}{2}M_1(0)^2t
+\int_0^{t_*}N(s)\,ds
+\frac{\lambda}{4}M_1(0)^2(t-t_*)\notag\\
&\le
M^b(0)
+\int_0^{t_*}N(s)\,ds
-\frac{\lambda}{4}M_1(0)^2t\notag
\end{align}
For sufficiently large $t$, specifically when
\begin{align*}
t> \left(M^b(0)
+\int_0^{t_*}N(s)\,ds\right)\frac{4}{\lambda M_1(0)^2}, 
\end{align*}
it follows that $M^b(t)<0$,
which is a contradiction. Thus, mass conservation
does not hold and gelation occurs.
\end{proof}

\begin{thm}\label{mainth-2}
Assume that $b\in C^0(\Rb;\Rb)$ is non-decreasing on $\Rb$, and that assumption (A3) holds with $\mu=0$. Suppose further that $u$ is a weak solution to the SCE on $I=[0,T]$, with $u_0\not\equiv 0$ and $M^b(0)<\infty$.
If
\[
T > \frac{2 M^b(0)}{\lambda M_1(0)^2},
\]
then gelation occurs; that is, there exists $T_*\in (0,T)$ such that $M_1(t)<M_1(0)$ for $t\in (T_*,T]$. 
\end{thm}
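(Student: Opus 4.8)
The plan is to reuse the generalized-moment estimate from the proof of Theorem~\ref{mainth}, exploiting that the hypothesis $\mu=0$ removes the error term $N(t)$ there; consequently no appeal to Theorem~\ref{M0inf} (hence to the kernel condition~\eqref{Kr}) is needed, and the estimate becomes a clean upper bound for $M^b(t)$ that is linear-decreasing in $t$ and forces gelation before time $2M^b(0)/(\lambda M_1(0)^2)$.

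First I would start from Theorem~\ref{TMI}. Since $b$ is nonnegative and non-decreasing, for $x,y\in\Rb$ we have $b(y)\le b(x+y)$ and $b(x)\ge 0$, so $b(x)\ge \tfrac12\bigl(b(x)+b(y)-b(x+y)\bigr)$; using this to bound the boundary term in Theorem~\ref{TMI} and then applying (A3) with $\mu=0$ gives, for $r\in\Rb$ and a.e.\ $t\in I$,
\begin{align*}
\partial_t m^b(r,t)
&\le \frac12\int_0^r\!\!\int_0^\infty\bigl(b(x+y)-b(x)-b(y)\bigr)K(x,y)u(x,t)u(y,t)\,dy\,dx\\
&\le -\frac{\lambda}{2}\,m_1(r,t)\,M_1(t).
\end{align*}
Integrating in time from $0$ to $t$ and letting $r\to\infty$ --- the integrand $m_1(r,s)M_1(s)$ is dominated on $I$ by the constant $M_1(0)^2$ by Proposition~\ref{Monotonically decreasing property of the moment}, so Lebesgue's dominated convergence theorem applies, while $m^b(r,t)\uparrow M^b(t)\le M^b(0)<\infty$ --- yields
\begin{align*}
0\le M^b(t)\le M^b(0)-\frac{\lambda}{2}\int_0^t M_1(s)^2\,ds\qquad(t\in I).
\end{align*}

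Next I would argue by contradiction. Note first that $M_1(0)>0$, since $u_0\ge 0$ on $\Rp$ and $u_0\not\equiv 0$. Assume gelation does not occur, i.e.\ there is no $T_*\in(0,T)$ with $M_1(t)<M_1(0)$ for all $t\in(T_*,T]$. Because $M_1(t)\le M_1(0)$ for all $t$ and $M_1$ is non-increasing (Proposition~\ref{Monotonically decreasing property of the moment}), this negation means that for every $T_*\in(0,T)$ there is some $t\in(T_*,T]$ with $M_1(t)=M_1(0)$; monotonicity then forces $M_1\equiv M_1(0)$ on $[0,T_*]$, and letting $T_*\uparrow T$ we obtain $M_1(s)=M_1(0)$ for all $s\in[0,T)$. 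Substituting this into the bound above gives $0\le M^b(0)-\tfrac{\lambda}{2}M_1(0)^2 t$ for every $t<T$, hence $T\le 2M^b(0)/(\lambda M_1(0)^2)$, contradicting the hypothesis. Therefore gelation occurs, and since $M_1$ is non-increasing one may take $T_*$ to be any time in $(0,T)$ at which $M_1$ has already fallen below $M_1(0)$.

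The moment manipulation and the dominated-convergence passage are routine, following the proof of Theorem~\ref{mainth} almost verbatim. The one delicate point is the logical translation of ``gelation does not occur'' into ``$M_1\equiv M_1(0)$ on $[0,T)$'': monotonicity of $M_1$ is essential here, and one must note that the negation pins $M_1$ down only on the half-open interval $[0,T)$ --- which is nevertheless enough, because the inequality $0\le M^b(0)-\tfrac{\lambda}{2}M_1(0)^2 t$ may be taken to the limit $t\uparrow T$.
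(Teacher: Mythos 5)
Your proposal is correct and follows essentially the same route as the paper: the paper's proof simply sets $N(t)=0$ in the argument of Theorem~\ref{mainth}, so that inequality \eqref{fl} becomes $M^b(t)\le M^b(0)-\frac{\lambda}{2}M_1(0)^2 t$ under the mass-conservation hypothesis, yielding the same contradiction for $t>2M^b(0)/(\lambda M_1(0)^2)$. Your write-up is in fact somewhat more careful than the paper's (the explicit dominated-convergence passage, the remark that \eqref{Kr} and Theorem~\ref{M0inf} are not needed, and the treatment of the half-open interval $[0,T)$ in the negation of gelation), but these are refinements of the identical argument rather than a different method.
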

\begin{proof}
In the proof of Theorem~\ref{mainth}, we may set $N(t)=0$. Therefore, inequality \eqref{fl} reduces to
\begin{align*}
M^b(t)\le M^b(0)-\frac{\lambda}{2}M_1(0)^2t.
\end{align*}
This leads a contradiction if $t>\frac{2 M^b(0)}{\lambda M_1(0)^2}$.
\end{proof}

\begin{prop}\label{prop-gel}
Suppose that $b\in C^0(\Rb; \Rb)$ is non-decreasing
and strictly subadditive on $\Rp$. Then, for $\vep >0$, $\lambda >0$, and $\mu\ge 0$, the coagulation kernel $K$ defined by
\begin{align*}
K(x,y)=\max\left(\frac{\lambda xy-\mu (x+y+1)}{b(x)+b(y)-b(x+y)},\varepsilon\right),
\end{align*}
satisfies the conditions (A3) and \eqref{Kr}. Therefore, the gelation occurs for any weak solution to the SCE on $\Rb$.
\end{prop}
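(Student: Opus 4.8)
The plan is to verify the two hypotheses on the kernel that Theorem~\ref{mainth} requires — the non-degeneracy condition \eqref{Kr} and the one-sided estimate (A3) — and then invoke that theorem directly, its remaining assumptions ($b\in C^0(\Rb;\Rb)$ non-decreasing, $u_0\not\equiv0$, $M^b(0)<\infty$) being already in force.

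First I would record that $K$ is an admissible kernel. Set $d(x,y):=b(x)+b(y)-b(x+y)$ and $n(x,y):=\lambda xy-\mu(x+y+1)$, so that $K(x,y)=\max\big(n(x,y)/d(x,y),\varepsilon\big)$ on $\Rp\times\Rp$. Strict subadditivity of $b$ on $\Rp$ gives $d(x,y)>0$ there, so the ratio is well defined; on the Lebesgue-null set $\{xy=0\}$ we set $K:=\varepsilon$. Then $K$ is symmetric, $K\ge\varepsilon>0$, and $K$ is continuous on $\Rp\times\Rp$; a routine check of the behaviour near the axes gives $K\in L^1_{\rm loc}(\Rb\times\Rb)$, so \eqref{Kcond} holds. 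Condition \eqref{Kr} is then immediate: for every $r>0$ and all $x,y\ge r$ one has $x,y\in\Rp$, hence $K(x,y)\ge\varepsilon$, so $\inf_{x,y\ge r}K(x,y)\ge\varepsilon>0$.

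The main step is (A3), which follows from the way $\max$ interacts with multiplication by the \emph{non-positive} quantity $b(x+y)-b(x)-b(y)=-d(x,y)$. For $x,y\in\Rp$, using $d(x,y)>0$ and $\max(a,\varepsilon)\ge a$ with $a=n(x,y)/d(x,y)$,
\[
\big(b(x+y)-b(x)-b(y)\big)K(x,y)=-d(x,y)\,\max\!\big(n(x,y)/d(x,y),\varepsilon\big)\le -n(x,y)=-\lambda xy+\mu(x+y+1).
\]
On the axes, say $y=0$ (the case $x=0$ being symmetric), the left-hand side equals $-b(0)\,K(x,0)\le0$ while the right-hand side of (A3) is $\mu(x+1)\ge0$, so the inequality holds there too since $b(0)\ge0$ and $K\ge0$. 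Hence (A3) is valid on all of $\Rb\times\Rb$ with exactly the constants $\lambda>0$ and $\mu\ge0$ used to define $K$, and Theorem~\ref{mainth} applies, giving gelation for every weak solution of the SCE on $\Rb$ with $u_0\not\equiv0$ and $M^b(0)<\infty$.

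I do not expect a genuine obstacle: the computation above is the whole point. The one delicate spot is the degenerate behaviour on $\{xy=0\}$ when $b(0)=0$, where the defining ratio becomes a $0/0$ (or $-\infty$) expression and is replaced by the convention $K=\varepsilon$; this is harmless because that set is Lebesgue-null and (A3) was verified on it directly. (If, in addition, one wanted $K$ to be continuous on all of $\Rb\times\Rb$, as in the companion construction for mass conservation, an extra hypothesis on $b$ near the origin would be required, but it is not needed for the gelation statement.)
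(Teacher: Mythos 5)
Your proposal is correct and follows essentially the same route as the paper: \eqref{Kr} is immediate from $K\ge\varepsilon$, and (A3) follows by multiplying the $\max$ through by the positive quantity $b(x)+b(y)-b(x+y)$ on $\Rp\times\Rp$. Your additional check on the set $\{xy=0\}$ (where the defining ratio may degenerate and the paper's argument is silent) is a welcome extra precaution, but it does not change the substance of the argument.
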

\begin{proof}
Since $K(x,y)\ge \vep$, \eqref{Kr} is trivially satisfied.
From the strict subadditivity of $b$ on $\Rp$, we know that
$b(x)+b(y)-b(x+y)>0$ for $x,~y\in \Rp$.
Therefore, for $x,~y\in \Rp$. Thus, for $x,~y\in \Rp$, we obtain
\begin{align*}
(b(x)+b(y)-b(x+y))K(x,y)
&=\max\big(
\lambda xy-\mu (x+y+1),\;
\varepsilon (b(x)+b(y)-b(x+y))\big),\\
&\ge \lambda xy-\mu (x+y+1),
\end{align*}
which implies condition (A3). 
\end{proof}

\begin{ex}\label{ex1}
{\rm From Proposition~\ref{additive}, if $b\in C^0(\Rb; \Rb)$ is non-decreasing and strictly concave on $\Rp$, then 
$b$ is non-decreasing
and strictly subadditive on $\Rp$.
One such example is $b(x)=x^\alpha$ with $0\leq\alpha<1$. 
This coagulation kernel exhibits the gelation phenomenon described in Theorem~\ref{mainth}. However, for $k\in\Rp$,
along the line $y=kx$, 
we have:
\begin{align*}
K(x,kx)=\frac{\lambda kx^2-\mu ((k+1)x+1)}
{(1+k^\alpha -(1+k)^\alpha)x^\alpha}=
\frac{\lambda k}
{1+k^\alpha -(1+k)^\alpha}x^{2-\alpha}+
O(x^{1-\alpha})\quad\text{as}~x\to\infty.
\end{align*}
}
\end{ex}

\begin{ex}\label{ex2}
{\rm Another example of such coagulation kernels is
given by choosing 
\begin{align*}
b(x)=\frac{x}{\log(x+2)},
\end{align*}
in Proposition~\ref{prop-gel}.
Again, since it is non-decreasing and strictly concave on $\Rp$, we can conclude that
it is non-decreasing
and strictly subadditive on $\Rp$.

In fact, we can easily check that $b$ is increasing as follows:
\begin{align*}
b'(x)=\frac{(x+2)\log(x+2)-x}{(x+2)(\log(x+2))^2}>0 \quad (x\in\Rb).
\end{align*}
Similarly, $b''(x)<0$ can be shown, but since it is lengthy, we will not show it here, but will directly show the subadditivity of $b$ as follows:
\begin{align*}
b(x)+b(y)-b(x+y)
&=
\frac{x}{\log(x+2)}+\frac{y}{\log(y+2)}
-\frac{x+y}{\log(x+y+2)}
\ge  \frac{x+y-(x+y)}{\log(x+y+2)}= 0
\end{align*}
This coagulation kernel exhibits the gelation phenomenon from Theorem~\ref{mainth}. 

Let us check the asymptotic behavior of
$K(x,y)$ 
along the line $y=kx$, for $k\in\Rp$.
First, we note that
\begin{align}\label{ae1}
b(ax)=\frac{ax}{\log (ax+2)}
=
\frac{ax}{\log x}
-\frac{(a\log a)x}{(\log x)^2}
+O\left( \frac{x}{(\log x)^3}\right)
\quad \text{as}~x\to \infty\quad (a\in \Rp).
\end{align}
Using the asymptotic expansion \eqref{ae1}, for $k\in\Rp$, we obtain
\begin{align*}
b(x)+b(kx)-b(x+kx)
=\frac{H(k)x}{(\log x)^2}+
O\left( \frac{x}{(\log x)^3}\right)
\quad \text{as}~x\to \infty,
\end{align*}
where $H(k):=(k+1)\log(k+1)-k\log k >0$.
This implies that
\begin{align*}
\big(b(x)+b(kx)-b(x+kx)\big)^{-1}
=\frac{(\log x)^2}{H(k)x}+
O\left( \frac{\log x}{x}\right)
\quad \text{as}~x\to \infty.
\end{align*}
So, we conclude that
\begin{align*}
K(x,kx)=\frac{\lambda kx^2-\mu ((k+1)x+1)}
{b(x)+b(kx)-b(x+kx)}=
\frac{\lambda k}{H(k)}x(\log x)^2
+O(x\log x)\quad\text{as}~x\to\infty.
\end{align*}
In summary, this $K(x,y)$ has a growth order of only $O(x (\log x)^2)$ in the above sense, but is still an example of a coagulation kernel that allows a gelation phenomenon.

Figure~\ref{fig:4} presents numerical examples computed using the finite volume method proposed in \cite{Filbet-Laurencot2004}, for the case with parameters $\varepsilon = 0.001$, $T = 3$, and initial condition $u_0(x) = e^{-x}$. In this computation, the spatial domain $\Rb$ was truncated to the finite interval $[0, R]$ with sufficiently large $R > 0$, and a non-uniform mesh was generated by dividing $[0, R]$ into $500$ intervals using the transformation $\varphi(x) = R(x/R)^3$. The time step was set as $\Delta t = T/N$ (see \cite{Filbet-Laurencot2004,Miyata2025} for further details).

Figure~\ref{fig:4a} shows the time evolution of $u(x,t)$. Figures~\ref{fig:4b} to \ref{fig:4f} depict the evolution of $M_1(t)$ for the parameter sets $(R,N) = (50, 1000),(200, 1000),(1000, 1000),(10^5, 8\times 10^4),(1.5\times 10^5, 1.2\times 10^5)$, respectively. Numerical gelation is observed in all cases. Notably, for $R \ge 10^5$, the behavior of $M_1(t)$ becomes nearly independent of $R$, strongly suggesting that gelation occurs around $t \approx 2.5$ in this example.
}
\end{ex}

\begin{figure}[htb]
  \centering
  \begin{minipage}{0.43\columnwidth}
    \centering
    \includegraphics[width=\columnwidth]{./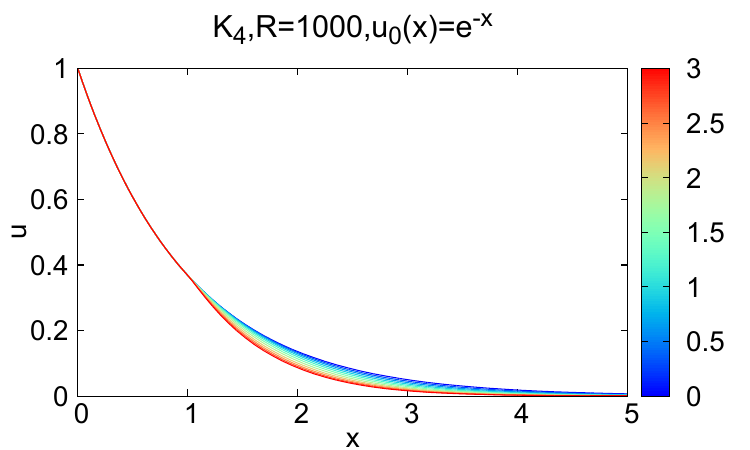}
    \subcaption{$u(x,t)$ $(x\in [0,5],~t\in [0,3])$}
    \label{fig:4a}
  \end{minipage}
  \hspace{5mm}
  \begin{minipage}{0.43\columnwidth}
    \centering
    \includegraphics[width=\columnwidth]{./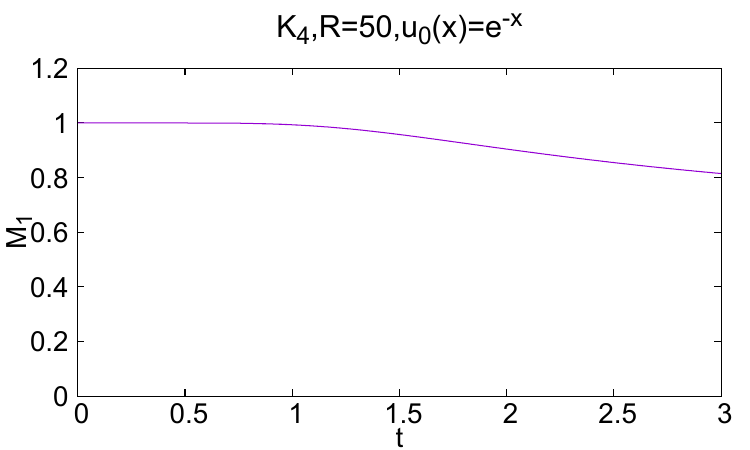}
    \subcaption{$M_1(t)$ $(t\in [0,3])$ with $R=50$}
    \label{fig:4b}
  \end{minipage}
  \\~\vspace{10pt}\\
  \begin{minipage}{0.43\columnwidth}
    \centering
    \includegraphics[width=\columnwidth]{./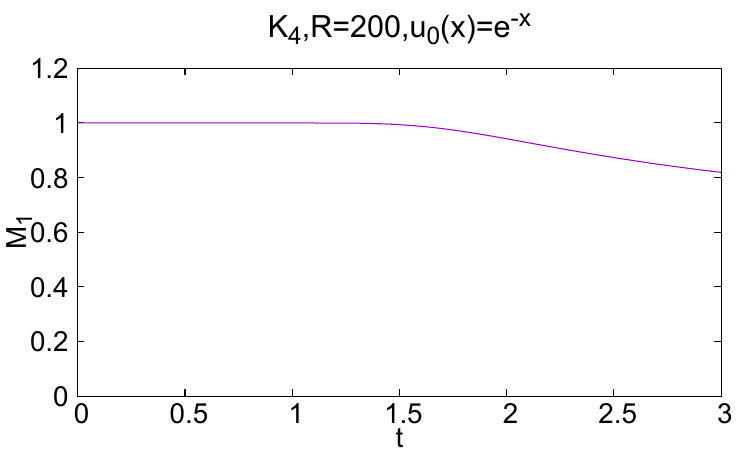}
    \subcaption{$M_1(t)$ $(t\in [0,3])$ with $R=200$}
    \label{fig:4c}
  \end{minipage}
  \hspace{5mm}
  \begin{minipage}{0.43\columnwidth}
    \centering
    \includegraphics[width=\columnwidth]{./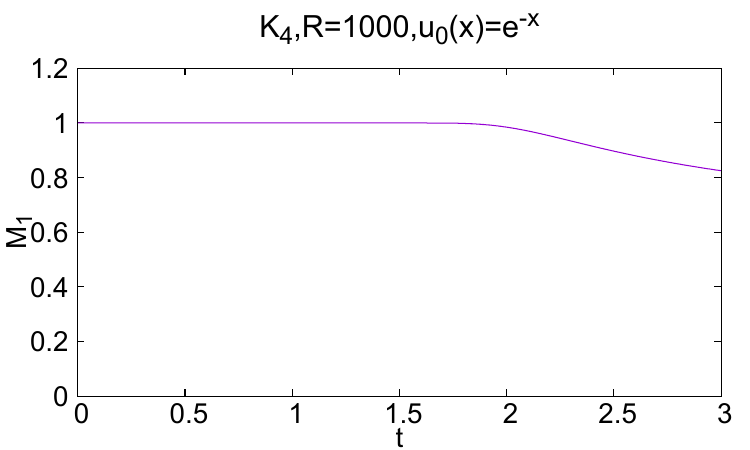}
    \subcaption{$M_1(t)$ $(t\in [0,3])$ with $R=1000$}
    \label{fig:4d}
  \end{minipage}
\\~\vspace{10pt}\\
  \begin{minipage}{0.43\columnwidth}
    \centering
    \includegraphics[width=\columnwidth]{./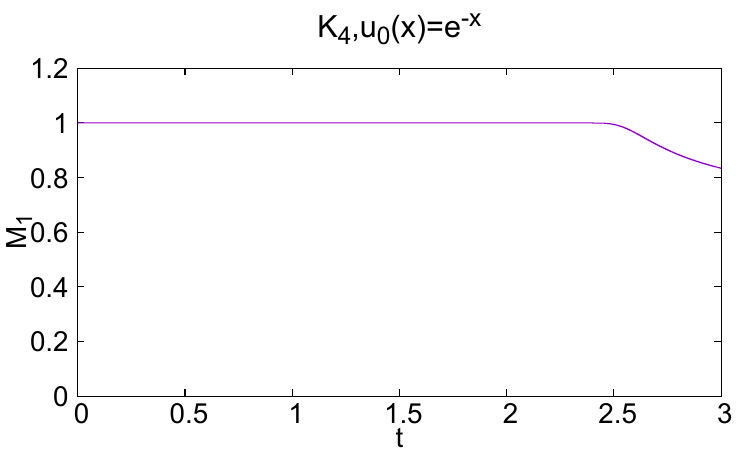}
    \subcaption{$M_1(t)$ $(t\in [0,3])$ with $R=10^5$}
    \label{fig:4e}
  \end{minipage}
  \hspace{5mm}
  \begin{minipage}{0.43\columnwidth}
    \centering
    \includegraphics[width=\columnwidth]{./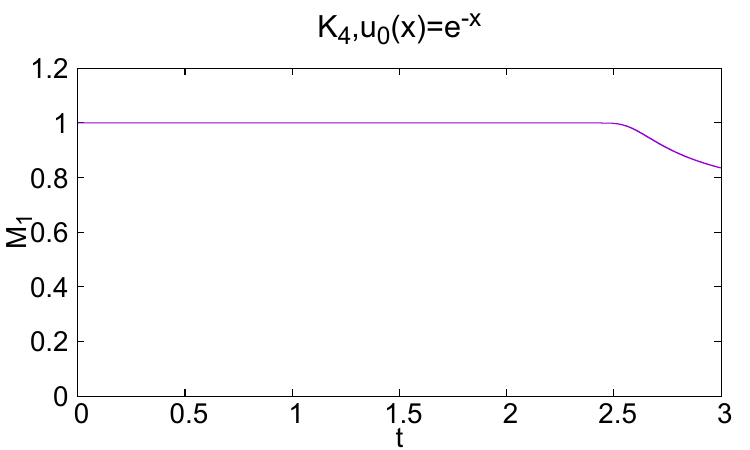}
    \subcaption{$M_1(t)$ $(t\in [0,3])$ with $R=1.5\times 10^5$}
    \label{fig:4f}
  \end{minipage}
  \caption{Numerical result for Example~\ref{ex2} with $u_0(x)=e^{-x}$}
  \label{fig:4}
\end{figure}

\section{Conclusion and discussion}\label{sec:7}
\setcounter{equation}{0}

In this paper, we investigated mass conservation and gelation phenomena for weak solutions to the Smoluchowski coagulation equation (SCE), employing the generalized moment method. Section~\ref{sec:2} introduced generalized moments and the notions of subadditivity and superadditivity for their associated weight functions. In Section~\ref{sec:3}, we defined weak solutions and the concept of mass flux, and explored their properties in detail. Section~\ref{sec:4} established foundational results concerning the generalized moments of weak solutions to the SCE, starting with the key result—the truncated generalized moment identity (Theorem~\ref{TMI}).

In Section~\ref{sec:5}, we presented one of the main results of this study: a sufficient condition for mass conservation (Theorem~\ref{mainth1}). This theorem generalizes earlier results obtained for classical solutions in \cite{I-K-M2023} to the setting of weak solutions. We demonstrated that a wide class of coagulation kernels \( K(x,y) \), including those with polynomial or even partial exponential growth, can still ensure mass conservation. As an application of this result, we recovered the known mass conservation property for homogeneous kernels (Proposition~\ref{hk-prop}).

The other main result is given in Section~\ref{sec:6}, where we established sufficient conditions under which gelation occurs (Theorems~\ref{mainth} and \ref{mainth-2}). We also provided explicit examples of coagulation kernels satisfying these conditions, including a family of kernels exhibiting growth of the order \( O(x(\log x)^2) \). These kernels are shown to induce gelation. The theoretical findings were further supported by detailed numerical experiments using a finite volume scheme.

The techniques developed in this work contribute to the mathematical theory of coagulation equations and offer a unified framework for analyzing both mass-conserving and gelation regimes. Future directions include extending the present analysis to systems involving fragmentation, diffusion, or spatial inhomogeneity. Another important avenue is the refinement of numerical methods to efficiently capture critical gelation dynamics and to accommodate more complex kernel structures. As demonstrated in this study, the synergy between rigorous analysis and computational experiments provides a powerful approach for advancing the understanding of coagulation phenomena.

~\\\noindent
{\bf Acknowledgment}:
This work was partially supported by JSPS KAKENHI Grant Nos. 25K00920, and 24H00184. 


\end{document}